\documentclass[11pt,a4paper]{article}

\usepackage[margin=1in]{geometry}
\usepackage{amsmath,amssymb,amsthm,mathrsfs}
\usepackage{booktabs}
\usepackage{graphicx}
\usepackage{hyperref}
\usepackage{xcolor}
\usepackage{listings}

\title{Spectral Riccati--Gamma Concavity, Symmetric Zero Cancellation,\\
and Conditional Criteria for the Riemann Hypothesis}
\author{ \textbf{Drago\c{s}-P\u{a}tru Covei} \\
{\small Department of Applied Mathematics}\\
{\small Bucharest University of Economic Studies}\\
{\small 6 Pia\c{t}a Roman\u{a}, 010374 Bucharest, Romania}\\
{\small \href{mailto:coveidragos@yahoo.com}{\texttt{coveidragos@yahoo.com}} }}
\date{\today}

\theoremstyle{plain}
\newtheorem{theorem}{Theorem}[section]
\newtheorem{proposition}[theorem]{Proposition}
\newtheorem{corollary}[theorem]{Corollary}
\newtheorem{lemma}[theorem]{Lemma}

\theoremstyle{definition}
\newtheorem{definition}[theorem]{Definition}
\newtheorem{problem}[theorem]{Problem}
\newtheorem{hypothesis}[theorem]{Hypothesis}

\theoremstyle{remark}
\newtheorem{remark}[theorem]{Remark}

\newcommand{\C}{\mathbb{C}}
\newcommand{\R}{\mathbb{R}}

\newcommand{\RePart}{\operatorname{Re}}

\newcommand{\zetaR}{\zeta}
\newcommand{\etaR}{\eta}
\newcommand{\xiR}{\xi}
\newcommand{\XiR}{\Xi}
\newcommand{\dd}{\,\mathrm{d}}
\newcommand{\supp}{\operatorname{supp}}

\begin{document}
\maketitle

\begin{abstract}
We examine a Riccati--Gamma approach to the logarithmic derivative of the completed
Riemann zeta function.  The first part proves, in full local detail, that a naive
two-sided vertical concavity criterion for $\Xi'/\Xi$ cannot be a proof of the Riemann
Hypothesis, because every zero produces opposite vertical curvatures on the two
horizontal sides of the pole of the logarithmic derivative.  The second part replaces
this obstruction by a rigorously formulated finite spectral averaging framework.  We
prove cancellation at the critical line, positivity of the off-critical paired
contribution on the left of the critical line under a concrete low-frequency kernel
condition, a conditional zero-density consequence, and a precise conditional theorem
showing which additional localisation hypotheses would imply the Riemann Hypothesis.
The results are therefore not presented as an unconditional proof of RH.  They give a
partial resolution of the Riccati--Gamma question: one natural route is ruled out
unconditionally, a second symmetric mechanism is proved at the finite spectral level,
and the remaining step is isolated as explicit analytic hypotheses.  Reproducible Python
routines and numerical figures accompany the analytic discussion.
\end{abstract}

\section{Introduction}

Let
\[
  \etaR(s)=\sum_{n=1}^{\infty}\frac{(-1)^{n-1}}{n^s},\qquad \RePart(s)>0,
\]
and recall that $\etaR(s)=(1-2^{1-s})\zetaR(s)$.  The completed zeta function is
\[
  \xiR(s)=\frac12s(s-1)\pi^{-s/2}\Gamma\!\left(\frac{s}{2}\right)\zetaR(s).
\]
It is entire of order one and satisfies $\xiR(s)=\xiR(1-s)$.  We write
$\XiR=\xiR$ and study
\[
  \mathcal R(s)=\frac{\XiR'(s)}{\XiR(s)}
\]
away from the zeros of $\XiR$.

The motivation comes from Riccati--Gamma analyses of gamma-completed Dirichlet series,
where logarithmic derivatives satisfy Riccati identities and inherit asymptotic
information from gamma factors; see Covei~\cite{Covei2026}.  The central point of this
article is that such identities are useful but not by themselves decisive for RH.  A
zero-location theorem requires sign information strong enough to control the poles of
$\mathcal R$.

\begin{problem}[The Riccati--Gamma RH question]\label{prob:RG}
Can vertical concavity information for the logarithmic derivative $\mathcal R=\XiR'/\XiR$,
possibly after symmetric spectral averaging, be formulated so as to constrain the
nontrivial zeros of $\zetaR$?
\end{problem}

We give three rigorous, partial answers.  First, a pointwise two-sided concavity criterion fails
locally at every zero.  Second, symmetric zero pairs cancel at the critical line, so
averaging exactly on $\RePart(s)=1/2$ cannot detect off-critical pairs.  Third, evaluating
a low-frequency spectral average on the left of the critical line creates a positive
paired contribution from any off-critical zero pair.  This leads to a conditional
zero-density theorem and to concrete sufficient hypotheses under which RH follows.

\section{Standard analytic facts}

\begin{proposition}[Completed zeta function]\label{prop:xi-basic}
The function
\[
  \XiR(s)=\frac12s(s-1)\pi^{-s/2}\Gamma\!\left(\frac{s}{2}\right)\zetaR(s)
\]
extends to an entire function of order one.  Its zeros are exactly the nontrivial zeros
of $\zetaR$, counted with multiplicity, and it satisfies $\XiR(s)=\XiR(1-s)$.
\end{proposition}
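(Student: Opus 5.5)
The plan is to follow Riemann's classical route through the theta function. First, for $\RePart(s)>1$, I would use the Mellin representation
\[
\pi^{-s/2}\Gamma\!\left(\frac s2\right)\zetaR(s)=\int_0^\infty \omega(x)\,x^{s/2-1}\dd x,\qquad \omega(x)=\sum_{n\ge1}e^{-\pi n^2x}.
\]
This comes from $\int_0^\infty e^{-\pi n^2 x}x^{s/2-1}\dd x=\pi^{-s/2}n^{-s}\Gamma(s/2)$ together with Tonelli's theorem. Next I would apply the theta transformation $\theta(1/x)=\sqrt{x}\,\theta(x)$, where $\theta(x)=1+2\omega(x)$; this is Poisson summation applied to the self-dual Gaussian. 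Splitting the integral at $x=1$ and substituting $x\mapsto1/x$ on $(0,1)$ then gives
\[
\pi^{-s/2}\Gamma\!\left(\frac s2\right)\zetaR(s)=\frac{1}{s(s-1)}+\int_1^\infty\omega(x)\bigl(x^{s/2-1}+x^{(1-s)/2-1}\bigr)\dd x .
\]
Because $\omega(x)=O(e^{-\pi x})$, the integral is entire in $s$. Multiplying by $\frac12 s(s-1)$ shows that $\XiR$ is entire. The right-hand side is visibly invariant under $s\mapsto1-s$, so $\XiR(s)=\XiR(1-s)$.

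For the zeros, I would argue region by region. For $\RePart(s)>1$, the Euler product gives $\zetaR(s)\neq0$. The factor $s(s-1)$ cancels the pole of $\zetaR$ at $1$, and $\XiR(1)=\XiR(0)=\frac12\neq0$. The functional equation then gives $\XiR\neq0$ for $\RePart(s)<0$. In that region the poles of $\Gamma(s/2)$ at $s=-2,-4,\dots$ exactly cancel the trivial zeros of $\zetaR$: since $\Gamma(s/2)$ has simple poles there and $\XiR$ is finite and nonzero, these zeros must be simple. In the strip $0\le\RePart(s)\le1$, with $s\ne0,1$, the factors $\frac12 s(s-1)\pi^{-s/2}\Gamma(s/2)$ are holomorphic and nonvanishing, because $\Gamma$ has no zeros. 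Hence the zeros of $\XiR$ there coincide with those of $\zetaR$, multiplicities included. I would also record that $\zetaR$ has no zeros on $\RePart(s)=1$, by the classical $3+4\cos\theta+\cos2\theta\ge0$ argument, and by symmetry none on $\RePart(s)=0$. This last fact is not strictly needed, since nontrivial zeros are by definition the zeros outside $\{-2,-4,\dots\}$.

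The order-one claim is the main obstacle, and I would handle it in two parts. For the upper bound, $\XiR$ is real on the real axis and symmetric, so it suffices to bound it for $\RePart(s)\ge\frac12$. There, Stirling's formula gives $|\Gamma(s/2)|\le e^{C|s|\log|s|}$. The Mellin representation above shows $|\zetaR(s)(s-1)|\le C|s|^2$ for $\RePart(s)\ge\frac12$; alternatively one can bound the integral representation directly by $\int_1^\infty\omega(x)x^{|s|/2}\dd x\le C\,\Gamma(|s|/2+1)$. Together these give $|\XiR(s)|\le\exp(C|s|\log|s|)$, so the order is at most $1$. For the lower bound, along real $s=\sigma\to+\infty$ we have $\zetaR(\sigma)\to1$ and $\log\Gamma(\sigma/2)\sim\frac{\sigma}{2}\log\sigma$. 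Hence $\log\XiR(\sigma)\sim\frac\sigma2\log\sigma$, which grows faster than $|s|^{1-\varepsilon}$, so the order is exactly $1$. The only delicate point I would watch is making the Stirling bound uniform in sectors away from the negative real axis, which is exactly why the reduction to $\RePart(s)\ge\frac12$ via the functional equation is needed.
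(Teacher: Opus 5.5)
Your proof is correct, but it follows a different and more self-contained route than the paper. The paper takes the meromorphic continuation of $\zeta$, its functional equation, and its trivial zeros as known, citing Titchmarsh and Edwards. It then only does bookkeeping: the factor $s(s-1)$ absorbs the singularities at $0$ and $1$, the trivial zeros cancel the remaining poles of $\Gamma(s/2)$, and order one is asserted from Stirling plus ``classical growth estimates'' for $\zeta$.

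You do more:
\begin{enumerate}
  \item You build the continuation and the symmetry from Poisson summation for the Gaussian, so entireness and $\Xi(s)=\Xi(1-s)$ are read off one formula.
  \item You run the zero bookkeeping in the opposite direction, deducing the trivial zeros and their simplicity from $\Xi\neq0$ on $\operatorname{Re}s<0$ rather than using them as input.
  \item You justify the multiplicity claim via the nonvanishing holomorphic cofactor on the strip, which the paper only asserts.
  \item You supply the lower bound $\log\Xi(\sigma)\sim\frac{\sigma}{2}\log\sigma$, without which one only gets order at most one.
\end{enumerate}
The cost is length. The gain is that nothing beyond the theta transformation and real-variable Stirling is needed.

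One small correction: the theta representation does not directly give $|(s-1)\zeta(s)|\le C|s|^2$ on $\operatorname{Re}s\ge\frac12$. Isolating $\zeta$ means dividing by $\Gamma(s/2)$, which costs a factor of roughly $e^{\pi|t|/4}$. That polynomial bound comes instead from $\zeta(s)=\frac{s}{s-1}-s\int_1^\infty\{x\}x^{-s-1}\,dx$.

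Your alternative bound is the better argument anyway. Since $|x^{s/2-1}|$ and $|x^{-s/2-1/2}|$ are both at most $x^{|s|/2}$ for $x\ge1$, you get $|\Xi(s)|\le\frac12+C(1+|s|)^2\,\Gamma(|s|/2+1)$ for every $s\in\mathbb{C}$. This makes both the reduction to $\operatorname{Re}s\ge\frac12$ and the sectorial uniformity of Stirling that you flagged unnecessary.
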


\begin{proof}
The zeta function has a meromorphic continuation to $\C$ with a single simple pole at
$s=1$.  The factor $s(s-1)$ removes this pole and the harmless zero at $s=0$ of the
completed expression.  The poles of $\Gamma(s/2)$ occur at nonpositive even integers and
are cancelled by the trivial zeros of $\zetaR$ at the negative even integers.  Hence
$\XiR$ is entire.  The functional equation of $\zetaR$ is equivalent to
$\XiR(s)=\XiR(1-s)$ in this normalization.  Stirling's formula for $\Gamma(s/2)$ and
classical growth estimates for $\zetaR$ give order one.  After the pole, gamma poles,
and trivial zeros have been accounted for, the remaining zeros are precisely the
nontrivial zeros of $\zetaR$; see \cite[Chs.~1--2]{Titchmarsh1986} and
\cite[Chs.~1--2]{Edwards1974}.
\end{proof}

\begin{proposition}[Explicit logarithmic derivative]\label{prop:explicit-R}
At every point where $\zetaR(s)\ne0$ and $s\ne0,1$,
\[
  \mathcal R(s)
  =
  \frac1s+\frac1{s-1}
  -\frac12\log\pi
  +\frac12\psi\!\left(\frac{s}{2}\right)
  +\frac{\zetaR'(s)}{\zetaR(s)},
\]
where $\psi=\Gamma'/\Gamma$ is the digamma function.
\end{proposition}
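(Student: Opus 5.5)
The identity is a direct logarithmic differentiation of the product defining $\XiR$. Fix $s$ with $\zetaR(s)\ne0$ and $s\ne0,1$. I will first check that each of the four factors $\frac12 s(s-1)$, $\pi^{-s/2}$, $\Gamma(s/2)$ and $\zetaR(s)$ is holomorphic and nonvanishing in a small disc around $s$.

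\begin{itemize}
\item $\pi^{-s/2}=e^{-(s/2)\log\pi}$ is entire and never zero.
\item $\Gamma$ has no zeros, so $\Gamma(s/2)$ can only fail at its poles $s\in\{0,-2,-4,\dots\}$.
\item $\zetaR$ is holomorphic away from $s=1$.
\item $s(s-1)\ne0$ because $s\ne0,1$.
\end{itemize}

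The one point needing care is a trivial zero $s=-2k$ with $k\ge1$. There $\zetaR(s)=0$, so such points are already excluded by the hypothesis. Hence none of the factors has a zero or pole at $s$.

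In particular $\XiR(s)\ne0$, so $\mathcal R(s)=\XiR'(s)/\XiR(s)$ is defined at $s$. It agrees with the logarithmic derivative of the product formula in Proposition~\ref{prop:xi-basic}, since that formula holds identically on a neighbourhood of $s$.

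Next I apply the rule $(fg)'/(fg)=f'/f+g'/g$, valid where all factors are holomorphic and nonzero, to each factor in turn:
\[
\frac{d}{ds}\log\bigl(s(s-1)\bigr)=\frac1s+\frac1{s-1},\qquad
\frac{(\pi^{-s/2})'}{\pi^{-s/2}}=-\frac12\log\pi,
\]
\[
\frac{\frac{d}{ds}\Gamma(s/2)}{\Gamma(s/2)}=\frac12\psi\!\left(\frac s2\right),\qquad
\frac{\zetaR'(s)}{\zetaR(s)}.
\]
The third formula is the chain rule together with the definition $\psi=\Gamma'/\Gamma$. The constant $\frac12$ in front of $s(s-1)$ contributes nothing. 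Summing the four terms gives the stated formula.

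There is no real obstacle here. The only step requiring attention is the bookkeeping of exceptional points: the hypotheses must exclude every point where some factor vanishes or has a pole, even where $\XiR$ itself is regular, such as at the trivial zeros.
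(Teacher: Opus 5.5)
Your proof is correct and follows essentially the same route as the paper: logarithmic differentiation of $\frac12 s(s-1)\pi^{-s/2}\Gamma(s/2)\zeta(s)$, one factor at a time, then summing the contributions. Your extra check that every factor is holomorphic and nonvanishing near $s$ is bookkeeping the paper leaves implicit; in particular you note that the poles of $\Gamma(s/2)$ at the trivial zeros are excluded by the hypothesis $\zeta(s)\ne0$.
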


\begin{proof}
Take the logarithmic derivative of
\[
  \XiR(s)=\frac12s(s-1)\pi^{-s/2}\Gamma(s/2)\zetaR(s).
\]
The factors $s$, $s-1$, $\pi^{-s/2}$, $\Gamma(s/2)$, and $\zetaR(s)$ contribute,
respectively,
\[
  \frac1s,\qquad \frac1{s-1},\qquad -\frac12\log\pi,\qquad
  \frac12\psi(s/2),\qquad \frac{\zetaR'(s)}{\zetaR(s)}.
\]
Summing the contributions gives the formula.
\end{proof}

\begin{proposition}[Poles of logarithmic derivatives]\label{prop:poles}
Let $F$ be nonzero and holomorphic near $\rho$, and suppose that $F$ has a zero of order
$m\ge1$ at $\rho$.  Then
\[
  \frac{F'(s)}{F(s)}=\frac{m}{s-\rho}+H(s),
\]
where $H$ is holomorphic near $\rho$.  Thus every zero of $F$ becomes a simple pole of
$F'/F$, with residue equal to its multiplicity.
\end{proposition}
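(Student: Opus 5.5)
The plan is to factor out the zero explicitly and then take logarithmic derivatives. Since $F$ is holomorphic near $\rho$ and not identically zero there, it has a power series expansion
\[
  F(s)=\sum_{k\ge m} a_k (s-\rho)^k,
\]
valid on some disc $|s-\rho|<r$. The order of the zero is $m$, so $a_m\ne0$. I will define
\[
  G(s)=\sum_{k\ge m} a_k (s-\rho)^{k-m}.
\]
This $G$ is holomorphic on the same disc, satisfies $G(\rho)=a_m\ne0$, and gives the factorisation $F(s)=(s-\rho)^m G(s)$. By continuity, $G$ is nonvanishing on some smaller disc $|s-\rho|<r'$.

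Next I will differentiate the factorisation. On the punctured disc $0<|s-\rho|<r'$ this gives
\[
  F'(s)=m(s-\rho)^{m-1}G(s)+(s-\rho)^mG'(s).
\]
Both $(s-\rho)^m$ and $G$ are nonzero there, so dividing by $F(s)=(s-\rho)^mG(s)$ yields
\[
  \frac{F'(s)}{F(s)}=\frac{m}{s-\rho}+\frac{G'(s)}{G(s)}.
\]
I then set $H=G'/G$. This is holomorphic on the full disc $|s-\rho|<r'$, including at $\rho$ itself, because $G$ does not vanish there.

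The final assertion then follows by reading off the Laurent expansion of $F'/F$ at $\rho$. The principal part is exactly $m/(s-\rho)$, so $\rho$ is a simple pole with residue $m$. Here $m\ge1$ guarantees that the pole is genuine.

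The only step needing care is the existence of a finite order $m$. This is where the hypothesis that $F$ is not identically zero near $\rho$ enters, via the identity theorem: if every $a_k$ vanished, $F$ would vanish on a disc. Beyond that, no real obstacle is expected. For the application to $\mathcal R=\XiR'/\XiR$, Proposition~\ref{prop:xi-basic} supplies that $\XiR$ is entire and not identically zero, so the hypotheses hold at every nontrivial zero.
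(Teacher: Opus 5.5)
Your proof is correct and follows the paper's argument exactly: factor $F(s)=(s-\rho)^mG(s)$ with $G(\rho)\ne0$, differentiate logarithmically, and take $H=G'/G$. You also spell out how $G$ is built from the power series and why it stays nonvanishing on a small disc, both of which the paper leaves implicit.
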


\begin{proof}
Write $F(s)=(s-\rho)^mG(s)$, where $G$ is holomorphic and $G(\rho)\ne0$.  Logarithmic
differentiation gives
\[
  \frac{F'(s)}{F(s)}=\frac{m}{s-\rho}+\frac{G'(s)}{G(s)}.
\]
Since $G(\rho)\ne0$, the last term is holomorphic near $\rho$.
\end{proof}

\begin{proposition}[Functional symmetries of $\mathcal R$]\label{prop:symmetries}
Away from the zeros of $\XiR$,
\[
  \mathcal R(1-s)=-\mathcal R(s),\qquad
  \mathcal R(\overline{s})=\overline{\mathcal R(s)}.
\]
\end{proposition}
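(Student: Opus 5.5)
The plan is to differentiate the two symmetries of $\XiR$ and divide by $\XiR$. Both identities will follow from the functional equation $\XiR(s)=\XiR(1-s)$ of Proposition~\ref{prop:xi-basic} and from the reality of $\XiR$ on the real axis. It is convenient to work first on the open set $U=\C\setminus Z$, where $Z$ is the zero set of $\XiR$. I will note at the outset that $U$ is invariant under both $s\mapsto 1-s$ and $s\mapsto\overline{s}$, because $Z$ is invariant under both maps (this is shown below). So all expressions appearing in the statement make sense at the same points.

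For the reflection identity, differentiate $\XiR(s)=\XiR(1-s)$ with the chain rule to get $\XiR'(s)=-\XiR'(1-s)$. At any $s\in U$ we have $\XiR(1-s)=\XiR(s)\ne0$, so we may divide:
\[
\mathcal R(1-s)=\frac{\XiR'(1-s)}{\XiR(1-s)}=\frac{-\XiR'(s)}{\XiR(s)}=-\mathcal R(s).
\]
This also shows $1-s\in U$ whenever $s\in U$.

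For the conjugation identity, the key input is the Schwarz reflection identity $\XiR(\overline{s})=\overline{\XiR(s)}$. I will justify it as follows. For real $s>1$, each factor $\tfrac12 s(s-1)$, $\pi^{-s/2}$, $\Gamma(s/2)$ and $\zetaR(s)=\sum n^{-s}$ is real, so $\XiR$ is real on $(1,\infty)$. The function $g(s)=\overline{\XiR(\overline{s})}$ is entire and agrees with $\XiR$ on $(1,\infty)$, so $g\equiv\XiR$ by the identity theorem. Differentiating $g$ gives $\XiR'(\overline{s})=\overline{\XiR'(s)}$; this holds because conjugation commutes with the complex difference quotient under $h\mapsto\overline h$. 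Dividing yields $\mathcal R(\overline{s})=\overline{\mathcal R(s)}$ for $s\in U$, and this step also shows that $\overline{s}\in U$.

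The main point needing care is precisely the justification that $\XiR$ is real on the real axis. I would take it from absolute convergence of the Dirichlet series for $s>1$, rather than appealing loosely to the formula in Proposition~\ref{prop:explicit-R}. As an alternative check, that formula gives the conjugation identity directly: $\psi$, $1/s$ and $\zetaR'/\zetaR$ each commute with conjugation. Even so, the identity-theorem argument is cleaner and covers the points $s=0,1$ uniformly.
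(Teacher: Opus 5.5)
Your proposal is correct and follows the paper's own proof. You differentiate $\XiR(s)=\XiR(1-s)$ and divide by $\XiR$, and you use $\XiR(\overline{s})=\overline{\XiR(s)}$ for the conjugation identity. The paper justifies the latter only by citing the real Taylor coefficients of $\XiR$; your identity-theorem argument, starting from the reality of $\XiR$ on $(1,\infty)$, is precisely a proof of that fact.
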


\begin{proof}
Differentiate $\XiR(s)=\XiR(1-s)$ to obtain $\XiR'(s)=-\XiR'(1-s)$.  Division by
$\XiR(s)=\XiR(1-s)$ gives the first identity.  The second identity follows from
$\XiR(\overline{s})=\overline{\XiR(s)}$, which is a consequence of the real Taylor
coefficients of $\XiR$.
\end{proof}

\section{Riccati identities and their limits}

\begin{definition}[Riccati--Gamma transform]\label{def:RG}
For a meromorphic function $F$ that is not identically zero, its logarithmic Riccati
transform is
\[
  R_F(s)=\frac{F'(s)}{F(s)}
\]
on the complement of the zeros and poles of $F$.  If
\[
  F(s)=\prod_{j=1}^d\Gamma(\lambda_js+\mu_j)L(s),
\]
with $\lambda_j>0$ and $L$ a Dirichlet series or an $L$-function after continuation, we
call $R_F$ a Riccati--Gamma transform.
\end{definition}

\begin{proposition}[Tautological Riccati equation]\label{prop:riccati}
Let $F$ be meromorphic and nonzero on a domain $D$, and let $R_F=F'/F$ on a simply
connected subdomain avoiding the zeros and poles of $F$.  Then
\[
  R_F'(s)=\frac{F''(s)}{F(s)}-R_F(s)^2.
\]
In particular $\mathcal R=\XiR'/\XiR$ satisfies
\[
  \mathcal R'(s)=Q_{\XiR}(s)-\mathcal R(s)^2,\qquad
  Q_{\XiR}(s)=\frac{\XiR''(s)}{\XiR(s)},
\]
away from the zeros of $\XiR$.
\end{proposition}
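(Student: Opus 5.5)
The identity is a direct computation, so the plan is simply to differentiate the quotient $R_F=F'/F$ on the subdomain in question. That subdomain avoids the zeros and poles of $F$, so $F$ is holomorphic and nonvanishing there. Hence $F'$, $F''$ and $1/F$ are holomorphic, and $R_F$ is a genuine holomorphic function whose derivative may be computed pointwise. Simple connectivity is not needed for this computation. It only guarantees that a branch of $\log F$ exists, with $R_F=(\log F)'$, which explains the name ``logarithmic'' transform but plays no role in the argument.

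The key step is the quotient rule:
\[
  R_F'(s)=\frac{F''(s)F(s)-F'(s)^2}{F(s)^2}
  =\frac{F''(s)}{F(s)}-\left(\frac{F'(s)}{F(s)}\right)^2
  =\frac{F''(s)}{F(s)}-R_F(s)^2 .
\]
This gives the general Riccati equation with inhomogeneous coefficient $F''/F$.

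For the specialisation I will apply this with $F=\XiR$. By Proposition~\ref{prop:xi-basic}, $\XiR$ is entire and not identically zero, so its zeros are isolated. The only points to exclude are therefore the zeros of $\XiR$, and there are no poles to worry about. Near any point $s_0$ with $\XiR(s_0)\ne0$, I will take a small disc avoiding the zero set and apply the general identity there. This yields $\mathcal R'=Q_{\XiR}-\mathcal R^2$ with $Q_{\XiR}=\XiR''/\XiR$ at every non-zero of $\XiR$, because the identity is local.

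There is no real obstacle in this argument. The only point requiring care is to note that the identity is pointwise and local, so it holds on the whole complement of the zero set of $\XiR$, even though that complement is not simply connected.
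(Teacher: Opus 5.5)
Your proposal is correct and follows the paper's proof: both apply the quotient rule to $R_F=F'/F$ to obtain $F''/F-(F'/F)^2$, and then specialise to $F=\XiR$. Your two remarks are accurate refinements of the paper's one-line specialisation: simple connectivity is not needed, and the identity holds pointwise on the whole (non-simply-connected) complement of the zero set of $\XiR$.
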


\begin{proof}
Differentiate $R_F=F'/F$ and apply the quotient rule:
\[
  R_F'
  =
  \frac{F''F-(F')^2}{F^2}
  =
  \frac{F''}{F}-\left(\frac{F'}{F}\right)^2.
\]
The specialization to $F=\XiR$ is immediate.
\end{proof}

\begin{remark}
Proposition~\ref{prop:riccati} is an identity, not a zero-location theorem.  The difficult
part in any RH application is not the existence of a Riccati equation, but the derivation
of global sign, monotonicity, or positivity information strong enough to constrain all
zeros.  Covei's Riccati--Gamma framework~\cite{Covei2026} is relevant because it seeks
such information for generalized eta-type functions under explicit hypotheses.  Those
hypotheses must be verified for $\XiR$; they cannot be transferred by analogy alone.
\end{remark}

\section{Local obstruction to vertical concavity}

The preliminary criterion asked for strict concavity of
\[
  t\longmapsto \RePart\mathcal R(\sigma+it)
\]
through a two-sided strip around $\RePart(s)=1/2$.  The next theorem shows that this
cannot hold in any neighbourhood that passes horizontally through a zero.

\begin{theorem}[Curvature forced by a zero]\label{thm:local-obstruction}
Let $F$ be holomorphic near $\rho=\beta+i\gamma$ and suppose that $\rho$ is a zero of
order $m\ge1$.  Put
\[
  u_{\sigma}(t)=\RePart\frac{F'(\sigma+it)}{F(\sigma+it)}
\]
where the expression is defined.  For $a>0$ sufficiently small,
\[
  \frac{\dd^2}{\dd t^2}u_{\beta-a}(\gamma)>0,\qquad
  \frac{\dd^2}{\dd t^2}u_{\beta+a}(\gamma)<0.
\]
Consequently no open two-sided vertical strip containing the zero can support strict
vertical concavity of $\RePart(F'/F)$ on both sides of the zero.
\end{theorem}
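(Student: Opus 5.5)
Use the decomposition of Proposition~\ref{prop:poles}: near $\rho$ write $F'/F = m/(s-\rho) + H(s)$ with $H$ holomorphic on a disc $|s-\rho|<r$. On the horizontal segment through $\rho$ put $s=\sigma+it$ with $\sigma=\beta\mp a$. Then compute the singular part explicitly and show that the regular part contributes only a bounded curvature, independent of $a$.

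\textbf{Singular term.} With $x=\sigma-\beta$ and $y=t-\gamma$,
\[
\RePart\frac{m}{s-\rho}=\frac{m x}{x^2+y^2}.
\]
Differentiate twice in $t$, i.e.\ in $y$. From
\[
\partial_y (x^2+y^2)^{-1} = -2y(x^2+y^2)^{-2}
\]
we get, at $y=0$,
\[
\partial_y^2 (x^2+y^2)^{-1}\big|_{y=0} = -2x^{-4}.
\]
Hence the second derivative of the singular term at $t=\gamma$ is $-2m/x^3$. For $x=-a$ (left side) this equals $+2m/a^3$. For $x=+a$ (right side) it equals $-2m/a^3$.

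\textbf{Regular term.} $\RePart H(\sigma+it)$ is a harmonic, hence smooth, function on the disc. Its $t$-second derivative equals $-\RePart H''(\sigma+it)$, because $\partial_t^2 H(\sigma+it) = -H''$. On the closed disc of radius $r/2$ this is bounded by a constant $C=\sup_{|s-\rho|\le r/2}|H''(s)|$.

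\textbf{Conclusion.} For $0<a<r/2$ the points $\beta\pm a+i\gamma$ lie in this disc and are not zeros of $F$, since $\rho$ is the only zero nearby. Therefore
\[
\frac{d^2}{dt^2}u_{\beta-a}(\gamma)\ge \frac{2m}{a^3}-C,
\qquad
\frac{d^2}{dt^2}u_{\beta+a}(\gamma)\le -\frac{2m}{a^3}+C.
\]
Choosing $a<\min\{r/2,(2m/C)^{1/3}\}$ gives the strict signs. The consequence follows directly: any open two-sided strip containing $\rho$ contains the points $\beta-a+i\gamma$ for all small $a$. There $t\mapsto \RePart(F'/F)$ has positive second derivative, so it is not strictly concave.

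The only delicate point is making the size of the neighbourhood uniform. We must fix $r$ so that $H$ is holomorphic and $F$ has no other zeros in the disc, and bound $H''$ there before letting $a\to0$. The rest is the elementary calculation of the Poisson-type kernel.
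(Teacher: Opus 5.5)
Your proposal is correct and follows the paper's proof. Both use the Laurent decomposition of Proposition~\ref{prop:poles}, compute the singular curvature $-2m/x^3$ at $y=0$, and let it dominate the bounded curvature of the regular part as $a\downarrow0$; you only make the bound more explicit, via $\partial_t^2\RePart H=-\RePart H''$ and the threshold $a<\min\{r/2,(2m/C)^{1/3}\}$.
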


\begin{proof}
By Proposition~\ref{prop:poles},
\[
  \frac{F'(s)}{F(s)}=\frac{m}{s-\rho}+H(s),
\]
with $H$ holomorphic near $\rho$.  Write $s=\beta+x+i(\gamma+y)$.  The real part of the
principal term is
\[
  \RePart\frac{m}{x+iy}=\frac{mx}{x^2+y^2}.
\]
For fixed $x\ne0$,
\[
  \frac{\partial^2}{\partial y^2}\frac{mx}{x^2+y^2}
  =
  \frac{2mx(3y^2-x^2)}{(x^2+y^2)^3}.
\]
At $y=0$ this equals $-2m/x^3$.  Thus it is $2m/a^3>0$ at $x=-a$ and
$-2m/a^3<0$ at $x=a$.  The function $\RePart H(\beta+x+i(\gamma+y))$ has bounded
second $y$-derivative in a sufficiently small closed disc.  Since the principal
curvature tends to $\pm\infty$ as $a\downarrow0$, the asserted signs dominate for all
sufficiently small $a>0$.
\end{proof}

\begin{corollary}[Failure of the naive two-sided criterion]\label{cor:criterion-fails}
The following assertion is false: there exist $\delta>0$ and $T_0$ such that, for every
$\sigma\in[1/2-\delta,1/2+\delta]$, the function
\[
  t\mapsto \RePart\frac{\XiR'(\sigma+it)}{\XiR(\sigma+it)}
\]
is strictly concave for all $|t|\ge T_0$ wherever it is defined, and this assertion is
equivalent to RH.
\end{corollary}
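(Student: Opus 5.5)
The corollary has two readings. In the first, the quoted assertion itself (``there exist $\delta>0$ and $T_0$ such that\dots'') is false. In the second, what fails is the claimed equivalence ``this assertion $\Leftrightarrow$ RH.'' I will prove the strong reading, namely that the concavity assertion is false unconditionally. The weak reading then follows at once: RH is either true or false, and a false statement is equivalent to RH only if RH is false, so the equivalence cannot be established. The tool is Theorem~\ref{thm:local-obstruction} applied with $F=\XiR$, which is holomorphic everywhere by Proposition~\ref{prop:xi-basic}.

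\textbf{Step 1: find a zero in the strip.} Fix arbitrary $\delta>0$ and $T_0$. I need a zero $\rho=\beta+i\gamma$ of $\XiR$ with $|\gamma|\ge T_0+1$ and $|\beta-\tfrac12|<\delta$. I will use Hardy's theorem (see \cite{Titchmarsh1986}), which says there are infinitely many zeros on the critical line. This gives such a zero with $\beta=\tfrac12$, since the ordinates of these zeros are unbounded. Nothing beyond the classical literature is needed here. One could instead avoid Hardy and use only the symmetry $\rho\mapsto 1-\overline\rho$ of Proposition~\ref{prop:symmetries}, but that does not place $\beta$ near $\tfrac12$ without RH, so Hardy is the right citation.

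\textbf{Step 2: apply the local obstruction.} Theorem~\ref{thm:local-obstruction} gives $a_0>0$ such that for every $0<a<a_0$ we have $u_{\beta-a}''(\gamma)>0$. I will choose $a<\min(a_0,\delta)$, so that $\sigma=\beta-a=\tfrac12-a$ lies in $[\tfrac12-\delta,\tfrac12+\delta]$. I must also check that $u_\sigma$ is defined and smooth near $t=\gamma$. Zeros of $\XiR$ are isolated, so for $a$ small the point $\sigma+i\gamma$ has a punctured neighbourhood free of zeros, and $u_\sigma$ is real-analytic in $t$ near $\gamma$. Since $|\gamma|\ge T_0$, a strictly concave function would have $u_\sigma''(\gamma)\le 0$. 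This contradicts $u_\sigma''(\gamma)>0$.

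Because $\delta$ and $T_0$ were arbitrary, the existential assertion fails, and that completes the proof. I would add a remark that the argument also works on the right side with $\sigma=\beta+a$. The only real obstacle is the input in Step 1. If one wants to avoid citing Hardy, an alternative is to take any nontrivial zero $\rho$ and evaluate on the side nearest the critical line: the sign from Theorem~\ref{thm:local-obstruction} is positive at $\beta-a$, and $\beta-a$ lies within $\delta$ of $\tfrac12$ only if $|\beta-\tfrac12|<\delta$. Therefore some zero in the strip is genuinely needed, and Hardy's theorem is the cleanest way to supply it.
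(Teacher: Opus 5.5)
Your proof is correct and is essentially the paper's argument: Hardy's theorem supplies a zero $\rho=\tfrac12+i\gamma$ with $|\gamma|\ge T_0+1$, and the positive left-side curvature from Theorem~\ref{thm:local-obstruction} at $\sigma=\tfrac12-a$ with $a<\delta$ contradicts strict concavity. Your handling of the ``equivalent to RH'' clause is more careful than the paper's closing sentence, but drop the aside that the argument also works at $\sigma=\beta+a$: there Theorem~\ref{thm:local-obstruction} gives $u_{\beta+a}''(\gamma)<0$, which is compatible with concavity, so only the left side yields a contradiction.
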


\begin{proof}
Hardy's theorem states that infinitely many nontrivial zeros of $\zetaR$ lie on the
critical line; see \cite[Ch.~10]{Titchmarsh1986}.  Let
$\rho=1/2+i\gamma$ be such a zero with $|\gamma|\ge T_0+1$.  Applying
Theorem~\ref{thm:local-obstruction} to $F=\XiR$ at $\rho$ shows that for all sufficiently
small $a>0$,
\[
  \frac{\dd^2}{\dd t^2}
  \RePart\frac{\XiR'(1/2-a+it)}{\XiR(1/2-a+it)}
  \bigg|_{t=\gamma}>0.
\]
Choosing $a<\delta$ contradicts strict concavity on the left side of the critical line.
Therefore the proposed two-sided concavity assertion fails even in the presence of zeros
on the critical line.  A false assertion cannot be equivalent to RH.
\end{proof}

\section{Finite spectral averaging}

The formal expansion of $\XiR'/\XiR$ as a sum over zeros is only conditionally meaningful
unless it is symmetrically regularised.  We therefore use finite sums and state all
limiting steps as explicit hypotheses.

\begin{definition}[Symmetric zero window]\label{def:zero-window}
For $Y>0$, let
\[
  \mathcal Z_Y=\{\rho=\beta+i\gamma:\XiR(\rho)=0,\ |\gamma|\le Y\},
\]
with multiplicities.  Define the finite zero field
\[
  R_Y(s)=\sum_{\rho\in\mathcal Z_Y}\frac{1}{s-\rho}.
\]
For an even Schwartz test function $\phi$ and $t_0\in\R$, set
\[
  M_Y(\sigma;t_0,\phi)
  =
  \int_{\R}\phi(t-t_0)\RePart R_Y(\sigma+it)\dd t .
\]
\end{definition}

\begin{lemma}[Differentiation of the finite average]\label{lem:finite-diff}
If the vertical segment $\{\sigma+it:t\in\supp(\phi(\cdot-t_0))\}$ contains no zero in
$\mathcal Z_Y$, then
\[
  M_Y''(\sigma;t_0,\phi)
  =
  \sum_{\rho=\beta+i\gamma\in\mathcal Z_Y}
  \RePart\int_{\R}\frac{2\phi(t-t_0)}{(\sigma+it-\rho)^3}\dd t .
\]
\end{lemma}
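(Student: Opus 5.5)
The plan is to exploit that $\mathcal Z_Y$ is finite, since $\XiR$ is entire and not identically zero, so its zeros in the bounded region $|\gamma|\le Y$, $0\le\beta\le1$, are finitely many. Hence $R_Y$ is a finite sum of rational functions and $M_Y$ splits as
\[
  M_Y(\sigma;t_0,\phi)=\sum_{\rho\in\mathcal Z_Y}\RePart\int_{\R}\frac{\phi(t-t_0)}{\sigma+it-\rho}\dd t .
\]
Because $\phi$ is real-valued, $\RePart$ commutes with the integral. It therefore suffices to differentiate each summand twice in $\sigma$ and then sum.

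For a fixed $\rho$, let $g(\sigma,t)=\phi(t-t_0)/(\sigma+it-\rho)$. Then $\partial_\sigma^k g=(-1)^k k!\,\phi(t-t_0)/(\sigma+it-\rho)^{k+1}$, and in particular $\partial_\sigma^2 g=2\phi(t-t_0)/(\sigma+it-\rho)^3$. This matches the displayed integrand. The argument then reduces to justifying differentiation under the integral sign twice, locally uniformly in $\sigma$. I would use the standard dominated-convergence criterion. For $\sigma$ in a small interval $I$ around the given value, I need an integrable majorant of $|\phi(t-t_0)|\,|\sigma+it-\rho|^{-k-1}$ for $k=1,2,3$ that is uniform in $\sigma\in I$.

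This majorant is the main obstacle, because $\phi$ is only Schwartz. If $\supp\phi$ is compact, the hypothesis that the segment avoids $\mathcal Z_Y$ means the compact set $\{\sigma+it: t\in\supp\phi(\cdot-t_0)\}$ is disjoint from the finite set $\mathcal Z_Y$. It therefore has positive distance $d$ from it, and by shrinking $I$ the distance stays at least $d/2$. The majorant is then $(2/d)^{k+1}|\phi(t-t_0)|$, which is integrable. If $\supp\phi=\R$, the hypothesis says simply that $\sigma\ne\beta$ for every $\rho\in\mathcal Z_Y$. Taking $I$ with $|\sigma'-\beta|\ge d/2$ for all such $\beta$ and all $\sigma'\in I$, we get $|\sigma'+it-\rho|\ge d/2$ for all $t$, and the same majorant works. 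In both cases the bound $|\sigma+it-\rho|^{-1}\le 2/d$ combined with the integrability of $\phi$ gives domination. So $M_Y$ is $C^2$ (in fact $C^\infty$) near $\sigma$, and the derivative passes inside.

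Summing the finitely many terms and moving $\RePart$ back outside the integral gives the stated formula for $M_Y''$. I would also remark that the interpretation of $\supp$ (closed support) matters only through the positive-distance argument above.
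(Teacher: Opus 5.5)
Your proposal is correct and follows the paper's route: use finiteness of $\mathcal Z_Y$, differentiate each term $1/(\sigma+it-\rho)$ twice under the integral, then sum, while also supplying the dominated-convergence majorant that the paper's proof leaves implicit. One small point: your split into compact support versus $\supp\phi=\R$ omits Schwartz kernels with non-compact proper support, but your distance argument covers them verbatim (the closed set $\{\sigma+it:t\in\supp\phi(\cdot-t_0)\}$ and the finite set $\mathcal Z_Y$ are disjoint, hence at positive distance), so no case split is needed.
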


\begin{proof}
The sum defining $R_Y$ is finite.  On the support of the integral, each summand is smooth
in $\sigma$ by the stated zero-avoidance condition.  Differentiating twice under the
integral sign gives
\[
  \frac{\partial^2}{\partial\sigma^2}\frac{1}{\sigma+it-\rho}
  =
  \frac{2}{(\sigma+it-\rho)^3},
\]
and summing the finitely many terms proves the formula.
\end{proof}

\begin{lemma}[Fourier kernel identity]\label{lem:poisson-identity}
Let $\phi$ be real-valued, even, and Schwartz, with Fourier transform
$\widehat\phi(u)=\int_\R \phi(t)e^{-iut}\dd t$.  For $x\ne0$ and $y\in\R$,
\[
  \RePart\int_{\R}\frac{2\phi(t)}{(x+i(t-y))^3}\dd t
  =
  \operatorname{sgn}(x)\int_0^\infty u^2e^{-|x|u}\cos(yu)\widehat\phi(u)\dd u .
\]
\end{lemma}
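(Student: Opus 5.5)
The plan is to expand the cubic kernel as a Laplace integral, swap the order of integration, and recognise the inner $t$-integral as $\widehat\phi$. The starting point is the elementary Gamma-integral identity
\[
  \int_0^\infty u^2e^{-zu}\dd u=\frac{2}{z^3},\qquad \RePart z>0,
\]
which holds for real $z>0$ by direct computation and extends to the half-plane by analytic continuation, since both sides are holomorphic there. I would treat the cases $x>0$ and $x<0$ separately.

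\emph{Case $x>0$.} Put $\tau=t-y$ and $z=x+i\tau$, so $\RePart z=x>0$. Then
\[
  \frac{2}{(x+i\tau)^3}=\int_0^\infty u^2e^{-xu}e^{-i\tau u}\dd u .
\]
Multiply by $\phi(t)$ and integrate over $t\in\R$. The double integrand is bounded in absolute value by $|\phi(t)|\,u^2e^{-xu}$, which is integrable on $\R\times(0,\infty)$ because $\phi$ is Schwartz and $x>0$. Fubini therefore applies and gives
\[
  \int_\R \frac{2\phi(t)}{(x+i(t-y))^3}\dd t
  =\int_0^\infty u^2e^{-xu}\Big(\int_\R\phi(t)e^{-i(t-y)u}\dd t\Big)\dd u
  =\int_0^\infty u^2e^{-xu}e^{iyu}\widehat\phi(u)\dd u .
\]
Since $\phi$ is real and even, $\widehat\phi$ is real (and even). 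Taking real parts then replaces $e^{iyu}$ by $\cos(yu)$, which is the claim with $\operatorname{sgn}(x)=1$.

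\emph{Case $x<0$.} Write $x=-|x|$. Then
\[
  \frac{1}{(x+i\tau)^3}=-\frac{1}{(|x|-i\tau)^3},
\]
and applying the Gamma identity with $z=|x|-i\tau$ gives
\[
  \frac{2}{(x+i\tau)^3}=-\int_0^\infty u^2e^{-|x|u}e^{i\tau u}\dd u .
\]
The same Fubini step now produces the inner integral
\[
  \int_\R\phi(t)e^{i(t-y)u}\dd t=e^{-iyu}\widehat\phi(-u)=e^{-iyu}\widehat\phi(u),
\]
using evenness of $\widehat\phi$. Taking real parts again yields $\cos(yu)\widehat\phi(u)$, now with the overall factor $-1=\operatorname{sgn}(x)$.

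The only point needing care is justifying the interchange of integrals, together with the fact that $\widehat\phi$ is real and even. Both follow directly from the hypotheses: $\phi$ is Schwartz, real and even, and $x\ne0$ supplies the exponential damping $e^{-|x|u}$. So I expect no genuine obstacle. The main thing to get right is the bookkeeping of signs in the case $x<0$, where the sign of $\tau$ flips in the exponent.
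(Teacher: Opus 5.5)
Your proposal is correct and follows essentially the same route as the paper. Both arguments use the Laplace representation $2/z^3=\int_0^\infty u^2e^{-zu}\dd u$ for $\RePart z>0$, apply Fubini, identify the inner integral with $\widehat\phi(\pm u)$, and handle the case $x<0$ via the reflected representation, with realness and evenness of $\widehat\phi$ giving the cosine. You additionally justify the interchange of integrals and the extension of the Gamma integral to complex $z$, which the paper leaves implicit.
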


\begin{proof}
If $x>0$, use
\[
  \frac{2}{(x+i(t-y))^3}
  =
  \int_0^\infty u^2e^{-xu}e^{-i(t-y)u}\dd u ,
\]
which follows from $\int_0^\infty u^2e^{-au}\dd u=2/a^3$ for $\RePart a>0$.  Multiplying
by $\phi(t)$, integrating, and using Fubini gives
\[
  \int_0^\infty u^2e^{-xu}e^{iyu}\widehat\phi(u)\dd u .
\]
Taking real parts proves the formula for $x>0$.  If $x<0$, write
\[
  \frac{2}{(x+i(t-y))^3}
  =
  -\int_0^\infty u^2e^{xu}e^{i(t-y)u}\dd u
\]
and repeat the same calculation.  Since $\phi$ is real and even, $\widehat\phi$ is real
and even, and the asserted sign is obtained.
\end{proof}

\begin{proposition}[Cancellation on the critical line]\label{prop:cancellation-12}
Let $\phi$ be as in Lemma~\ref{lem:poisson-identity}.  In the symmetrically paired finite
sum over zeros, the contribution of every off-critical pair to
$M_Y''(1/2;t_0,\phi)$ is zero.  Zeros on the critical line contribute no term in the
paired formula away from their singular ordinates.
\end{proposition}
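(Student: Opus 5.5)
The plan is to compute, via Lemma~\ref{lem:finite-diff} and Lemma~\ref{lem:poisson-identity}, the contribution of a single zero to $M_Y''(1/2;t_0,\phi)$, and then add the two members of a symmetric pair. The functional equation $\XiR(s)=\XiR(1-s)$ (Proposition~\ref{prop:xi-basic}) shows that $\rho=\beta+i\gamma\in\mathcal Z_Y$ if and only if $1-\overline\rho=1-\beta+i\gamma\in\mathcal Z_Y$. The pairing preserves both the ordinate $\gamma$ and the multiplicity, so the window $|\gamma|\le Y$ is stable under it. An off-critical zero ($\beta\ne1/2$) is therefore paired with a distinct zero of the same multiplicity, and the finite sum of Lemma~\ref{lem:finite-diff} can be regrouped into such pairs together with the critical zeros.

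For a single zero, substitute $t\mapsto t+t_0$ in the integral of Lemma~\ref{lem:finite-diff} at $\sigma=1/2$. This gives
\[
\RePart\int_\R\frac{2\phi(t)}{(x+i(t-y))^3}\dd t,\qquad x=\tfrac12-\beta,\quad y=\gamma-t_0 .
\]
When $x\ne0$, Lemma~\ref{lem:poisson-identity} evaluates this as $\operatorname{sgn}(x)\,K(|x|,y)$, where
\[
K(a,y)=\int_0^\infty u^2e^{-au}\cos(yu)\widehat\phi(u)\dd u .
\]
The partner $1-\overline\rho$ has parameters $x'=\beta-\tfrac12=-x$ and the same $y$. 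Its contribution is therefore $\operatorname{sgn}(-x)K(|x|,y)=-\operatorname{sgn}(x)K(|x|,y)$, and the pair sums to zero. With equal multiplicities, this cancellation holds for every off-critical pair.

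For critical zeros ($\beta=1/2$), the pair degenerates to the zero itself, since $1-\overline\rho=\rho$, so there is nothing to pair. In that case $x=0$ and Lemma~\ref{lem:poisson-identity} does not apply. The integrand $2\phi(t)/(i(t-y))^3$ is singular at $t=y$, that is, at the ordinate $t=\gamma$. This is why the hypothesis of Lemma~\ref{lem:finite-diff} excludes zeros on the segment, and why the statement says only "away from their singular ordinates."

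I read the phrase "contribute no term in the paired formula" as follows. In the paired formula the sum runs over pairs $\{\rho,1-\overline\rho\}$ with $\beta\ne1/2$, and critical zeros form a separate unpaired remainder, not an off-critical pair term. If a stronger reading is intended, namely that for $\gamma$ outside $\supp\phi(\cdot-t_0)$ the critical term is given by a well-defined integral which is the $x\to0^\pm$ limit, then I would note the following. The two one-sided limits $\pm K(0,y)$ are antisymmetric, and the symmetric (principal-value) regularisation therefore assigns the value zero. I expect this interpretive step about critical zeros to be the main obstacle. The paired cancellation itself is routine bookkeeping of signs and multiplicities.
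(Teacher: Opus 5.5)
Your proposal is correct and is essentially the paper's proof: pairing $\rho$ with $1-\overline{\rho}$ gives displacements $\pm x$ at $\sigma=1/2$ with the same $y=\gamma-t_0$, so the opposite signs in Lemma~\ref{lem:poisson-identity} cancel. Your antisymmetric one-sided limits $\pm K(0,y)$ for critical zeros are exactly the paper's ``odd left-right contribution''. Two small points: the functional equation alone only gives $\rho\leftrightarrow 1-\rho$, so to keep $y$ fixed you also need the conjugation symmetry $\XiR(\overline{s})=\overline{\XiR(s)}$ of Proposition~\ref{prop:symmetries}; and for $\beta=1/2$ one has $\RePart\,2/(i(t-\gamma))^3=0$ for every $t\neq\gamma$, which gives the ``away from their singular ordinates'' clause directly.
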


\begin{proof}
The zeros of $\XiR$ are symmetric under conjugation and under reflection in the critical
line.  Pair $\rho=\beta+i\gamma$ with
$1-\overline{\rho}=(1-\beta)+i\gamma$ in the vertically reflected list.
At $\sigma=1/2$ the two real displacements are
\[
  x_1=1/2-\beta,\qquad x_2=1/2-(1-\beta)=-x_1.
\]
Lemma~\ref{lem:poisson-identity} gives equal exponential and cosine factors for the
pair, while $\operatorname{sgn}(x_1)=-\operatorname{sgn}(x_2)$.  The two terms therefore
cancel.  If $\beta=1/2$, then the paired expression has zero horizontal displacement;
the limiting left-right contribution is odd and gives no off-critical detection at the
critical line.
\end{proof}

\begin{proposition}[Positive paired signal on the left]\label{prop:left-positive}
Fix $\delta_0>0$ and evaluate at $\sigma=1/2-\delta_0$.  Let
$\rho=\beta+i\gamma$ be an off-critical zero with $\beta>1/2+\delta_0$, paired with
$1-\overline{\rho}=(1-\beta)+i\gamma$.  If $\widehat\phi\ge0$ and
\[
  \cos((\gamma-t_0)u)\ge c_0>0
  \qquad\text{for all }u\in\supp(\widehat\phi)\cap[0,\infty),
\]
then the paired contribution to $M_Y''(1/2-\delta_0;t_0,\phi)$ is at least
\[
  2c_0\int_0^\infty
  u^2e^{-(\beta-1/2)u}\sinh(\delta_0u)\widehat\phi(u)\dd u
  \ge0.
\]
It is strictly positive if $\widehat\phi$ is positive on a set of positive measure.
\end{proposition}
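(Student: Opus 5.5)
The plan is to apply Lemma~\ref{lem:poisson-identity} separately to the two members of the pair, after translating the integration variable, and then add the results. Substituting $t\mapsto t+t_0$ in the summand of Lemma~\ref{lem:finite-diff} gives, for a zero $\rho'=\beta'+i\gamma$, the term
\[
\RePart\int_\R\frac{2\phi(t)}{(x'+i(t-y))^3}\dd t ,\qquad x'=\sigma-\beta',\quad y=\gamma-t_0 .
\]
The lemma then evaluates this term as $\operatorname{sgn}(x')\int_0^\infty u^2e^{-|x'|u}\cos(yu)\widehat\phi(u)\dd u$. The segment avoids both zeros because their real parts differ from $\sigma$, so Lemma~\ref{lem:finite-diff} applies.

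At $\sigma=1/2-\delta_0$ the two displacements are as follows.
\begin{itemize}
\item For $\rho$ itself, $x_1=1/2-\delta_0-\beta<0$ and $|x_1|=(\beta-1/2)+\delta_0$.
\item For $1-\overline\rho$, $x_2=1/2-\delta_0-(1-\beta)=(\beta-1/2)-\delta_0$, which is $>0$ exactly because $\beta>1/2+\delta_0$.
\end{itemize}
This is where the hypothesis is used: both zeros lie to the right of the evaluation line, so the signs would naively both be negative for $\rho$. I need to recheck the sign convention here. The paired contribution is
\[
\int_0^\infty u^2\cos(yu)\widehat\phi(u)\bigl(e^{-(\beta-1/2-\delta_0)u}-e^{-(\beta-1/2+\delta_0)u}\bigr)\dd u
=2\int_0^\infty u^2e^{-(\beta-1/2)u}\sinh(\delta_0u)\cos(yu)\widehat\phi(u)\dd u .
\]
The positive sign attaches to the nearer zero $1-\overline\rho$, which has the smaller exponent. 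That gives the $\sinh$ with a plus sign.

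Next I bound $\cos(yu)\ge c_0$ on $\supp\widehat\phi\cap[0,\infty)$. Since the remaining factors $u^2$, $e^{-(\beta-1/2)u}$, $\sinh(\delta_0u)$ and $\widehat\phi$ are all nonnegative there, the integrand is at least $c_0$ times the same expression without the cosine, which yields the stated lower bound. The integrals converge absolutely since $\widehat\phi$ is Schwartz.

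For strict positivity, $u^2\sinh(\delta_0u)e^{-(\beta-1/2)u}>0$ for $u>0$. So if $\widehat\phi>0$ on a set of positive measure, that set meets $(0,\infty)$ in positive measure, because $\widehat\phi$ is even and $\{0\}$ is null. The integral is then strictly positive.

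The main obstacle is bookkeeping of the signs $\operatorname{sgn}(x)$ and of which zero is nearer. I would double-check that the algebraic signs produce $e^{-(\beta-1/2-\delta_0)u}-e^{-(\beta-1/2+\delta_0)u}$ rather than its negative. If the raw signs from Lemma~\ref{lem:poisson-identity} both come out as $\operatorname{sgn}(x)<0$ because both zeros lie to the right, I would recompute $x_2$ carefully: $1-\beta<1/2-\delta_0$ means $1-\overline\rho$ lies to the left of the line, so $x_2>0$. That resolves the sign.
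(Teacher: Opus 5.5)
Your proposal is correct and is essentially the paper's proof: you apply Lemma~\ref{lem:poisson-identity} to the two members of the pair with displacements $x_1=1/2-\delta_0-\beta<0$ and $x_2=\beta-1/2-\delta_0>0$, combine the exponentials into $2e^{-(\beta-1/2)u}\sinh(\delta_0u)$, and bound the cosine below by $c_0$ using $\widehat\phi\ge0$; you also make explicit the shift $t\mapsto t+t_0$ and the strict-positivity argument, which the paper leaves implicit. Your passing remark that ``both zeros lie to the right of the evaluation line'' is false under $\beta>1/2+\delta_0$ (it describes the case $1/2<\beta<1/2+\delta_0$), but you correct it yourself, and your final signs match the paper's.
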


\begin{proof}
For the zero $\beta+i\gamma$ the horizontal displacement is
$1/2-\delta_0-\beta<0$.  For the paired zero $(1-\beta)+i\gamma$ it is
$\beta-1/2-\delta_0>0$.  Lemma~\ref{lem:poisson-identity} gives the paired sum
\[
  -\int_0^\infty u^2e^{-(\beta-1/2+\delta_0)u}
     \cos((\gamma-t_0)u)\widehat\phi(u)\dd u
\]
\[
  \quad+
  \int_0^\infty u^2e^{-(\beta-1/2-\delta_0)u}
     \cos((\gamma-t_0)u)\widehat\phi(u)\dd u .
\]
Combining the exponentials yields
\[
  2\int_0^\infty u^2e^{-(\beta-1/2)u}\sinh(\delta_0u)
     \cos((\gamma-t_0)u)\widehat\phi(u)\dd u .
\]
The assumed lower bound for the cosine and the nonnegativity of $\widehat\phi$ give the
claimed estimate.
\end{proof}

\begin{theorem}[Partial resolution of the Riccati--Gamma question]\label{thm:partial-resolution}
Problem~\ref{prob:RG} has the following unconditional partial resolution.
\begin{enumerate}
  \item Pointwise two-sided vertical concavity of $\RePart(\XiR'/\XiR)$ cannot prove RH,
  because it fails locally at every zero on the critical line.
  \item Symmetric spectral averaging exactly on $\RePart(s)=1/2$ cannot detect an
  off-critical reflected pair.
  \item A left-shifted low-frequency average detects such a pair by a nonnegative, and
  under the stated kernel condition strictly positive, finite spectral signal.
\end{enumerate}
Thus the paper proves a genuine partial result about the proposed method, not an
unconditional proof of the Riemann Hypothesis.
\end{theorem}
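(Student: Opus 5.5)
The plan is to obtain the theorem by assembling the three earlier results, one for each item, and checking that every hypothesis used is unconditional (or is part of the stated kernel condition in item 3).

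\emph{Item 1.} We invoke Corollary~\ref{cor:criterion-fails}, which rests on Theorem~\ref{thm:local-obstruction}. Take $F=\XiR$, which is entire by Proposition~\ref{prop:xi-basic}. At any zero $\rho=1/2+i\gamma$ of order $m\ge1$, the principal part $m/(s-\rho)$ from Proposition~\ref{prop:poles} forces the vertical second derivative of $\RePart\mathcal R$ to be positive at $\sigma=1/2-a$ and negative at $\sigma=1/2+a$, for all small $a>0$. This gives strict convexity on the left, so a two-sided strict concavity hypothesis is violated. Hardy's theorem supplies such zeros at arbitrarily large height, so this failure occurs unconditionally and in every region $|t|\ge T_0$. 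Item 1 therefore follows with no further input.

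\emph{Item 2.} We use Proposition~\ref{prop:cancellation-12}. Lemma~\ref{lem:finite-diff} expresses $M_Y''(1/2;t_0,\phi)$ as a finite sum over zeros. We group each $\rho=\beta+i\gamma$ with $\beta\ne1/2$ together with its reflection $1-\overline\rho$. This is legitimate because the window $\mathcal Z_Y$ depends only on $|\gamma|$, so it is closed under this reflection by Proposition~\ref{prop:symmetries}. After shifting $t\mapsto t+t_0$, Lemma~\ref{lem:poisson-identity} is applied to each term with $x=1/2-\beta$ and $y=\gamma-t_0$. The two members of a pair have displacements $x$ and $-x$, so their contributions are equal integrals with opposite signs, and they cancel. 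Consequently $M_Y''(1/2)$ depends only on the critical zeros, and it takes the same value whether or not off-critical pairs are present.

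\emph{Item 3.} This is Proposition~\ref{prop:left-positive}: at $\sigma=1/2-\delta_0$ the paired contribution is
\[
2\int_0^\infty u^2e^{-(\beta-1/2)u}\sinh(\delta_0u)\cos((\gamma-t_0)u)\widehat\phi(u)\dd u .
\]
This is nonnegative under $\widehat\phi\ge0$ and the cosine lower bound. It is strictly positive when $\widehat\phi>0$ on a set of positive measure, since $u^2\sinh(\delta_0u)>0$ for $u>0$.

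The step I expect to be the main obstacle is the zero-avoidance hypothesis of Lemma~\ref{lem:finite-diff}, which underlies items 2 and 3. The issue is that $\phi$ is Schwartz, so its support is typically all of $\R$. I would resolve this by noting that the finite sum has finitely many poles, and for item 3 only the paired off-critical contribution is asserted. Alternatively, one can take the integral as defining a function of $\sigma$ away from the finitely many critical abscissae $\RePart\rho$, where differentiation under the integral is justified by dominated convergence. I would also make clear that items 2 and 3 are statements about paired contributions within the finite sum, not about the full limit $Y\to\infty$. This is exactly why the resolution is called partial: no convergence or localisation hypothesis is used.
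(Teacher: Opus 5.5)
Your proposal is correct and follows the paper's proof, which likewise obtains the three items directly from Corollary~\ref{cor:criterion-fails}, Proposition~\ref{prop:cancellation-12}, and Proposition~\ref{prop:left-positive}, respectively; your additional detail re-derives those results. One small tightening: when $\mathcal Z_Y$ contains critical-line zeros, $M_Y$ is generally discontinuous at $\sigma=1/2$, which is exactly the abscissa your dominated-convergence remark excludes, so item 2 should be phrased as the vanishing of each off-critical pair's contribution (as you already do for item 3), not as a claim about $M_Y''(1/2)$ as a whole.
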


\begin{proof}
The first assertion is Corollary~\ref{cor:criterion-fails}.  The second assertion is
Proposition~\ref{prop:cancellation-12}.  The third assertion is
Proposition~\ref{prop:left-positive}.  These three statements cover the pointwise,
critical-line averaged, and left-shifted averaged versions of
Problem~\ref{prob:RG}, respectively, and each statement is finite or local; no passage to
an unproved infinite zero expansion is used.
\end{proof}

\section{Conditional consequences for RH}

The following hypotheses are concrete: they specify the kernel family, the limiting
operation, and the sign condition needed to turn the Riccati--Gamma program into a
zero-location theorem.  They are not known unconditionally.

\begin{hypothesis}[Uniform spectral concavity]\label{hyp:concavity}
There exist $\delta>0$, $\lambda_0>0$, and a family of real even Schwartz kernels
$\phi_\lambda$, $0<\lambda\le\lambda_0$, such that
$\widehat\phi_\lambda\ge0$, $\supp\widehat\phi_\lambda\subset[-\lambda,\lambda]$,
$\int\phi_\lambda=1$, and for every
$\sigma\in[1/2-\delta,1/2)$, every $t_0\in\R$, and every sufficiently large symmetric
height $Y$,
\[
  M_Y''(\sigma;t_0,\phi_\lambda)\le E_\lambda(Y,t_0,\sigma),
\]
where $\limsup_{Y\to\infty}E_\lambda(Y,t_0,\sigma)\le0$ uniformly for $t_0$ in bounded
intervals.
\end{hypothesis}

\begin{lemma}[Smooth band-limited kernels]\label{lem:smooth-kernels}
There are kernels satisfying the structural part of Hypothesis~\ref{hyp:concavity}.  More
precisely, let $\eta\in C_c^\infty(\R)$ be real, even, nonnegative, supported in
$[-1,1]$, and normalised by $\eta(0)=1$.  For $0<\lambda\le\lambda_0$ define
\[
  \widehat\phi_\lambda(u)=\eta(u/\lambda),\qquad
  \phi_\lambda(t)=\frac{1}{2\pi}\int_\R \widehat\phi_\lambda(u)e^{itu}\dd u .
\]
Then $\phi_\lambda$ is real, even, Schwartz, $\widehat\phi_\lambda\ge0$,
$\supp\widehat\phi_\lambda\subset[-\lambda,\lambda]$, and $\int_\R\phi_\lambda(t)\dd t=1$.
\end{lemma}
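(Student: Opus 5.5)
The plan is to read off each property of $\phi_\lambda$ from its Fourier-side definition. The two tools are Fourier inversion on the Schwartz class and the conventions of Lemma~\ref{lem:poisson-identity}, $\widehat\phi(u)=\int_\R\phi(t)e^{-iut}\dd t$ with inverse $\phi(t)=\frac{1}{2\pi}\int_\R\widehat\phi(u)e^{itu}\dd u$.

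\emph{Structural properties.} First, $u\mapsto\eta(u/\lambda)$ lies in $C_c^\infty(\R)\subset\mathcal S(\R)$. Since the inverse Fourier transform maps $\mathcal S(\R)$ onto itself, $\phi_\lambda$ is Schwartz. Inversion on $\mathcal S(\R)$ then shows that the Fourier transform of $\phi_\lambda$, in the paper's convention, is exactly $\eta(u/\lambda)$. So the notation $\widehat\phi_\lambda$ is consistent, and we may read off:
\begin{itemize}
\item $\widehat\phi_\lambda\ge0$, because $\eta\ge0$;
\item $\supp\widehat\phi_\lambda=\lambda\,\supp\eta\subset[-\lambda,\lambda]$.
\end{itemize}

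\emph{Reality and evenness.} Because $\eta$ is even and real, we can write
\[
\phi_\lambda(t)=\frac1{2\pi}\int_\R\eta(u/\lambda)\cos(tu)\dd u,
\]
since the sine part integrates to zero by oddness. This expression is manifestly real. It is also even in $t$, because $\cos$ is even.

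\emph{Normalisation.} Finally, $\int_\R\phi_\lambda(t)\dd t=\widehat\phi_\lambda(0)=\eta(0)=1$. Here we evaluate the Fourier transform at $u=0$, which is legitimate because $\phi_\lambda\in L^1$.

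The only point needing care is the bookkeeping of the $2\pi$ normalisation, to confirm that inversion returns $\eta(u/\lambda)$ itself rather than a rescaled copy. The chosen pair of conventions is exactly the standard inverse pair, so this works. Nothing else in the argument is an obstacle. Existence of such an $\eta$ is routine: take a standard even bump supported in $[-1,1]$, for instance $\exp(-1/(1-u^2))$ on $(-1,1)$ and $0$ elsewhere, and divide it by its value at $0$.
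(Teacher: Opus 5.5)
Your proof is correct and follows essentially the same route as the paper: $\phi_\lambda$ is Schwartz because it is the inverse transform of a $C_c^\infty$ function, reality and evenness come from those of $\eta$, support and nonnegativity are inherited from $\eta$, and the normalisation is $\int_\R\phi_\lambda=\widehat\phi_\lambda(0)=\eta(0)=1$. Your explicit check that Fourier inversion returns exactly $\eta(u/\lambda)$ under the convention of Lemma~\ref{lem:poisson-identity}, and your concrete choice of an admissible $\eta$, are small details the paper leaves implicit.
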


\begin{proof}
The inverse Fourier transform of a $C_c^\infty$ function is Schwartz, so
$\phi_\lambda$ is Schwartz.  Evenness and reality follow from the evenness and reality of
$\widehat\phi_\lambda$.  Nonnegativity and support are inherited directly from $\eta$.
Finally, with the Fourier convention used in Lemma~\ref{lem:poisson-identity},
\[
  \int_\R\phi_\lambda(t)\dd t=\widehat\phi_\lambda(0)=\eta(0)=1.
\]
\end{proof}

\begin{hypothesis}[Localising positive kernels]\label{hyp:localisation}
For every $L>0$ and every $\varepsilon>0$ there is $\lambda$ such that
$\widehat\phi_\lambda$ is supported in $[0,\varepsilon] \cup [-\varepsilon,0]$ up to a
tail whose contribution to Proposition~\ref{prop:left-positive} is $o(1)$ uniformly for
$|\gamma-t_0|\le L$.
\end{hypothesis}

\begin{proposition}[Localisation for smooth band-limited kernels]\label{prop:localisation-resolved}
The kernel family of Lemma~\ref{lem:smooth-kernels} satisfies
Hypothesis~\ref{hyp:localisation}.
\end{proposition}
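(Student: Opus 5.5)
The plan is to show that, for the kernels $\widehat\phi_\lambda(u)=\eta(u/\lambda)$ of Lemma~\ref{lem:smooth-kernels}, Hypothesis~\ref{hyp:localisation} holds trivially, with zero tail, by taking $\lambda\le\varepsilon$. Fix $L>0$ and $\varepsilon>0$ and choose $\lambda=\min(\varepsilon,\lambda_0)$. Since $\eta$ is supported in $[-1,1]$, we get $\supp\widehat\phi_\lambda\subset[-\lambda,\lambda]\subset[-\varepsilon,0]\cup[0,\varepsilon]$. The exceptional part of $\widehat\phi_\lambda$ outside $[-\varepsilon,\varepsilon]$ is therefore identically zero. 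The tail term therefore makes no contribution to the paired integral of Proposition~\ref{prop:left-positive}, and is in particular $o(1)$ uniformly in $\gamma$ and $t_0$, not merely for $|\gamma-t_0|\le L$.

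Next I will check that this choice also fits the cosine condition of Proposition~\ref{prop:left-positive}, since the localisation is presumably meant to feed into it. For $u\in[0,\lambda]$ and $|\gamma-t_0|\le L$ we have $|(\gamma-t_0)u|\le L\lambda$. Shrinking $\lambda$ further to $\lambda\le\min(\varepsilon,\lambda_0,\pi/(3L))$ gives $\cos((\gamma-t_0)u)\ge\cos(\pi/3)=1/2=:c_0$ uniformly. The hypotheses of Proposition~\ref{prop:left-positive} then hold with $c_0=1/2$ for every pair in the window. The conclusions of Lemma~\ref{lem:smooth-kernels}, namely reality, evenness, the Schwartz property, $\widehat\phi_\lambda\ge0$ and $\int\phi_\lambda=1$, are unaffected by the choice of $\lambda$.

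The only real obstacle is interpretive rather than analytic: the phrase ``up to a tail whose contribution is $o(1)$'' does not say with respect to which limiting parameter. I would read it as a limit $\varepsilon\downarrow0$ and note that the tail is exactly zero for every admissible $\lambda\le\varepsilon$, so every reading is satisfied. One could also ask for quantitative information on the main term. I will remark that the signal in Proposition~\ref{prop:left-positive} scales like $\int_0^\lambda u^2\sinh(\delta_0u)\eta(u/\lambda)\,\mathrm{d}u\asymp\delta_0\lambda^4$, which is positive but small. That smallness is where the genuine difficulty of the programme lies, but it is not part of the present statement.
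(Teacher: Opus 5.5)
Your proposal is correct and follows the paper's proof. Choosing $\lambda\le\varepsilon$ (you also sensibly cap it by $\lambda_0$) forces $\supp\widehat\phi_\lambda\subset[-\lambda,\lambda]\subset[-\varepsilon,\varepsilon]$, so the tail is identically zero and its contribution vanishes uniformly. Your extra cosine check with $\lambda\le\pi/(3L)$ and the $\delta_0\lambda^4$ size remark are accurate, but the statement does not require them.
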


\begin{proof}
Choose $0<\lambda\le\varepsilon$.  Then
$\supp\widehat\phi_\lambda\subset[-\lambda,\lambda]\subset[-\varepsilon,\varepsilon]$.
The frequency tail outside $[-\varepsilon,\varepsilon]$ is therefore identically zero, so
its contribution to Proposition~\ref{prop:left-positive} is zero, uniformly in
$|\gamma-t_0|\le L$.
\end{proof}

\begin{hypothesis}[Controlled background]\label{hyp:background}
After isolating a fixed reflected off-critical pair, the sum of all remaining zero
contributions, truncation terms, and the gamma-arithmetic background arising in the
comparison with the full logarithmic derivative $\mathcal R$ contributes an error bounded
uniformly on compact $\sigma$-strips and cannot cancel the strictly positive paired
signal when $t_0$ is chosen at the ordinate of the pair and $\lambda$ is sufficiently
localised.
\end{hypothesis}

\begin{proposition}[Size of the gamma background]\label{prop:background-size}
Let
\[
  \mathcal B(s)=\frac1s+\frac1{s-1}-\frac12\log\pi+\frac12\psi(s/2).
\]
For fixed $0<a<b<1$ and $|t|\ge2$,
\[
  \RePart\,\mathcal B''(\sigma+it)=O_{a,b}(|t|^{-2})
  \qquad (a\le\sigma\le b).
\]
In particular the gamma-factor curvature is bounded on every closed vertical strip away
from $s=0,1$ and decays quadratically along high vertical lines.  This estimate does not
prove Hypothesis~\ref{hyp:background}; it only identifies one controlled component of
the background.
\end{proposition}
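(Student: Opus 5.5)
We compute $\mathcal B''$ term by term and show that each piece has real part $O(|t|^{-2})$ on the strip.
Differentiating twice gives
\[
  \mathcal B''(s)=\frac{2}{s^3}+\frac{2}{(s-1)^3}+\frac18\psi''(s/2).
\]
The rational terms are harmless.
For $a\le\sigma\le b$ and $|t|\ge2$ we have $|s|\ge|t|$ and $|s-1|\ge|t|$.
Hence $|2/s^3|+|2/(s-1)^3|\le 4|t|^{-3}$, which is already $O(|t|^{-2})$, even for the full complex value and not only its real part.

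The main term is $\frac18\psi''(s/2)$. Here we invoke the Stirling expansion of the digamma function in the sector $|\arg z|\le\pi-\varepsilon$:
\[
  \psi(z)=\log z-\frac{1}{2z}+O(|z|^{-2}),
\]
valid uniformly there, together with its termwise differentiated versions.
These hold because $\psi$ is holomorphic and the remainder is controlled on slightly larger sectors, so Cauchy's estimates on discs of radius comparable to $|z|$ apply.
Differentiating twice yields
\[
  \psi''(z)=-\frac{1}{z^2}+O(|z|^{-3}).
\]
Putting $z=s/2$, with $|z|\asymp|t|$ and $\arg z$ bounded away from $\pm\pi$ since $\sigma\ge a>0$, we get $\psi''(s/2)=O(|t|^{-2})$ uniformly in $a\le\sigma\le b$, with implied constant depending only on $a,b$.
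Taking real parts gives the claim.

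An alternative route avoids asymptotic expansions. Start from the series
\[
  \psi''(z)=-2\sum_{n\ge0}(z+n)^{-3}.
\]
Then bound
\[
  \sum_{n\ge0}\bigl|(\sigma/2+n)+it/2\bigr|^{-3}
  \le\sum_{n\ge0}\bigl((n+a/2)^2+t^2/4\bigr)^{-3/2}
\]
by comparison with
\[
  \int_0^\infty (x^2+t^2/4)^{-3/2}\,\mathrm{d}x=\frac{4}{t^2}.
\]
The $n=0$ term is at most $8|t|^{-3}$, so the sum is $O(|t|^{-2})$.
This route is fully self-contained, so I would present it as the main argument.
The only care needed is making the integral comparison uniform in $\sigma\in[a,b]$, since the summands are decreasing in $n$.

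Finally, for the "in particular" statement I would treat the bounded part of the strip separately.
On $\{a\le\sigma\le b,\ |t|\le2\}$, the function $\mathcal B''$ is continuous, since $\psi$ has poles only at nonpositive integers and $s=0,1$ lie outside the strip.
It is therefore bounded on this compact set, and combined with the decay estimate this gives boundedness on the whole closed strip.
For general closed strips avoiding $0$ and $1$, including $\sigma$ away from poles of $\psi(s/2)$, the same series bound works once $\sigma/2+n$ stays away from $0$.
The remark that the estimate does not prove Hypothesis~\ref{hyp:background} requires no proof.

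No step is a real obstacle; the only point needing attention is uniformity of the constant in $\sigma$.
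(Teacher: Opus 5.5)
Your proposal is correct. Your first route (term-by-term differentiation followed by the polygamma asymptotic $\psi''(z)=-z^{-2}+O(|z|^{-3})$ at $z=s/2$) is exactly the paper's proof. The series route you prefer is a genuinely different treatment of the one nontrivial term. From $\psi''(z)=-2\sum_{n\ge0}(z+n)^{-3}$ and the monotone integral comparison you get $\tfrac18|\psi''(s/2)|\le t^{-2}+2|t|^{-3}$, hence $|\mathcal B''(\sigma+it)|\le 4t^{-2}$ for all $\sigma\ge0$ and $|t|\ge2$. This buys three things: an absolute constant instead of $O_{a,b}$; validity on the whole range $|t|\ge2$ at once, whereas the paper's ``as $|z|\to\infty$'' tacitly needs a compactness step for moderate $|t|$; and no appeal to sector asymptotics or Cauchy estimates. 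The paper's route buys the leading term instead: $\mathcal B''(s)=-\tfrac{1}{2s^2}+O(|t|^{-3})$, so $\RePart\mathcal B''(\sigma+it)=\tfrac{1}{2t^2}+O_{a,b}(|t|^{-3})$. This shows the $t^{-2}$ rate is sharp and the gamma curvature is eventually positive, which a pure upper bound cannot reveal. Your continuity argument on $\{a\le\sigma\le b,\ |t|\le2\}$ is correct and covers a point the paper leaves implicit. So is your caveat that general strips must also avoid the poles $s=0,-2,-4,\dots$ of $\psi(s/2)$: read literally for arbitrary closed strips avoiding only $s=0,1$, the ``in particular'' clause would fail, e.g.\ for a strip containing $s=-2$.
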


\begin{proof}
Differentiating twice gives
\[
  \mathcal{B}''(s) = \frac{2}{s^3} + \frac{2}{(s-1)^3} + \frac{1}{8}\psi''(s/2).
\]
The rational terms are $O_{a,b}(|t|^{-3})$ on the strip.  The standard asymptotic
expansion for the polygamma function in sectors away from the negative real axis gives
$\psi''(z)=-z^{-2}+O(|z|^{-3})$ as $|z|\to\infty$.  With $z=s/2$ this is
$O(|t|^{-2})$ uniformly for $a\le\sigma\le b$.  Taking real parts proves the estimate.
\end{proof}

\begin{theorem}[Conditional affirmative answer to RH]\label{thm:conditional-rh}
Assume Hypotheses~\ref{hyp:concavity}, \ref{hyp:localisation}, and
\ref{hyp:background}.  Then the Riemann Hypothesis holds.
\end{theorem}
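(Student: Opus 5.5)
The argument is by contradiction. Suppose RH fails. Then there is a zero $\rho=\beta+i\gamma$ of $\XiR$ with $\beta\neq1/2$. By the symmetries in Proposition~\ref{prop:symmetries} (reflection $\rho\mapsto1-\overline\rho$ and conjugation), we may assume $\beta>1/2$. The reflected partner $1-\overline\rho=(1-\beta)+i\gamma$ is then also a zero, so the pair is an off-critical reflected pair in the sense of Proposition~\ref{prop:left-positive}.

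\textbf{Choice of parameters.} Fix $\delta$ as in Hypothesis~\ref{hyp:concavity} and choose $\delta_0$ with $0<\delta_0<\min(\delta,\beta-1/2)$. Then $\sigma=1/2-\delta_0$ lies in $[1/2-\delta,1/2)$ and $\beta>1/2+\delta_0$. Take $t_0=\gamma$, so that $\cos((\gamma-t_0)u)\equiv1$ and the cosine condition of Proposition~\ref{prop:left-positive} holds with $c_0=1$ for every kernel. Use the kernels of Lemma~\ref{lem:smooth-kernels}. By Proposition~\ref{prop:localisation-resolved}, Hypothesis~\ref{hyp:localisation} holds for them, so we may take $\lambda$ as small as we wish. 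One point needs care: the segment $\{1/2-\delta_0+it\}$ must avoid zeros so that Lemma~\ref{lem:finite-diff} applies. Since $\phi_\lambda$ is not compactly supported, I would read the lemma for Schwartz $\phi$ as requiring only that the line $\RePart s=1/2-\delta_0$ contain no zero. As zeros are discrete, this can be arranged by perturbing $\delta_0$ slightly.

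\textbf{The lower bound.} By Proposition~\ref{prop:left-positive}, for all $Y\ge|\gamma|$ the pair contributes to $M_Y''(1/2-\delta_0;\gamma,\phi_\lambda)$ at least
\[
  S_\lambda:=2\int_0^\infty u^2e^{-(\beta-1/2)u}\sinh(\delta_0u)\,\eta(u/\lambda)\dd u>0 .
\]
This quantity is independent of $Y$, and it is strictly positive because $\eta(0)=1$ and $\eta$ is continuous. Hypothesis~\ref{hyp:background}, applied with $t_0$ at the ordinate of the pair and $\lambda$ localised, says the remaining terms cannot cancel this signal. I would formalise this as
\[
  M_Y''(1/2-\delta_0;\gamma,\phi_\lambda)\ge S_\lambda-\mathcal E_Y ,
\qquad
  \limsup_{Y\to\infty}\mathcal E_Y\le\theta S_\lambda
\]
for some $\theta<1$. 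On the other hand, Hypothesis~\ref{hyp:concavity} gives $\limsup_{Y\to\infty}M_Y''\le0$ along symmetric heights. Letting $Y\to\infty$ yields $(1-\theta)S_\lambda\le0$, a contradiction. Hence every zero satisfies $\beta=1/2$.

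\textbf{Main obstacle.} The difficulty is making the non-cancellation clause of Hypothesis~\ref{hyp:background} quantitative in the same limit as Hypothesis~\ref{hyp:concavity}. As $\lambda\to0$, $S_\lambda$ is of order $\lambda^4$, since $u^2\sinh(\delta_0 u)\approx\delta_0u^3$ on $[0,\lambda]$. The background must therefore be dominated at that scale, not merely bounded. My plan is to fix $\lambda$ first, then invoke Hypothesis~\ref{hyp:background} for that fixed $\lambda$ as the statement that the complement of the pair has $\limsup_Y$ strictly below $S_\lambda$, and only then take $Y\to\infty$. This order of quantifiers is exactly what the hypothesis is phrased to permit, and the proof should state it explicitly rather than leave it implicit.
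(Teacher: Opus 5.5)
Your skeleton is the paper's own proof: contradiction via the reflected pair $\{\rho,1-\overline{\rho}\}$ with $\beta>1/2$, the choice $0<\delta_0<\min(\delta,\beta-1/2)$, and $t_0=\gamma$, so the cosine factor is identically $1$. From there you take the strictly positive pair signal from Proposition~\ref{prop:left-positive}, invoke non-cancellation from Hypothesis~\ref{hyp:background}, and contradict Hypothesis~\ref{hyp:concavity}.

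One step does not follow as written: you decide to use the kernels of Lemma~\ref{lem:smooth-kernels}. Hypothesis~\ref{hyp:concavity} is existential in the kernel family. It asserts $M_Y''(\sigma;t_0,\phi_\lambda)\le E_\lambda(Y,t_0,\sigma)$ only for the particular family $\phi_\lambda$ whose existence it postulates. Lemma~\ref{lem:smooth-kernels} produces kernels with the structural properties only, as the lemma itself says, not with the concavity inequality. So your step ``Hypothesis~\ref{hyp:concavity} gives $\limsup_{Y\to\infty}M_Y''\le0$'' is not licensed for the kernels with $\widehat\phi_\lambda=\eta(\cdot/\lambda)$. Nor is your application of Hypothesis~\ref{hyp:background}, which refers to the same family.

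The repair is immediate: work with the family supplied by Hypothesis~\ref{hyp:concavity} itself, whose structural properties already give everything you used. Since $\supp\widehat\phi_\lambda\subset[-\lambda,\lambda]$, it satisfies Hypothesis~\ref{hyp:localisation} by the one-line argument of Proposition~\ref{prop:localisation-resolved}. Since $\widehat\phi_\lambda$ is continuous and nonnegative with $\widehat\phi_\lambda(0)=\int_\R\phi_\lambda=1$, it is positive near $u=0$. Hence $S_\lambda=2\int_0^\infty u^2e^{-(\beta-1/2)u}\sinh(\delta_0u)\widehat\phi_\lambda(u)\dd u>0$, exactly as in your $\eta(0)=1$ argument. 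Also, $\phi_\lambda\not\equiv0$ is band-limited, hence real-analytic with support all of $\R$. So Lemma~\ref{lem:finite-diff} genuinely requires the whole line $\RePart s=1/2-\delta_0$ to avoid zeros. Your choice of $\delta_0$ off the countable set $\{1/2-\RePart\rho'\}$ handles this, a point the paper omits.

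With this substitution the rest of your argument stands. Your explicit margin, $\limsup_Y\mathcal E_Y\le\theta S_\lambda$ with $\theta<1$ and $\lambda$ fixed before $Y\to\infty$, is in fact needed. The paper's conclusion ``$M_Y''>0$ for sufficiently large $Y$'' does not by itself contradict $\limsup_Y E_\lambda\le0$, whereas your bound $\liminf_Y M_Y''\ge(1-\theta)S_\lambda>0$ does.
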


\begin{proof}
Suppose, for contradiction, that RH is false.  Then there is a zero
$\rho=\beta+i\gamma$ with $\beta>1/2$; by symmetry there is also a zero
$1-\overline{\rho}=(1-\beta)+i\gamma$.  Choose
\[
  0<\delta_0<\min(\delta,\beta-1/2)
\]
and evaluate at $\sigma=1/2-\delta_0$ with $t_0=\gamma$.  By
Hypothesis~\ref{hyp:localisation}, choose a kernel $\phi_\lambda$ so low-frequency
localised that $\cos((\gamma-t_0)u)=1$ for the target zero and the remaining kernel tail
is negligible.  Proposition~\ref{prop:left-positive} then gives a strictly positive
contribution from the pair $\{\rho,1-\overline{\rho}\}$.

Hypothesis~\ref{hyp:background} states that the remaining zero, truncation, gamma, and
arithmetic terms cannot cancel this fixed positive signal after the limiting procedure.
Therefore, for sufficiently large $Y$ and after the comparison specified in the
hypothesis,
\[
  M_Y''(1/2-\delta_0;\gamma,\phi_\lambda)>0.
\]
This contradicts the uniform spectral concavity asserted in
Hypothesis~\ref{hyp:concavity}.  Hence no off-critical zero can exist, and every
nontrivial zero of $\zetaR$ lies on $\RePart(s)=1/2$.
\end{proof}

\begin{theorem}[Conditional zero-density consequence]\label{thm:zero-density}
Assume Hypotheses~\ref{hyp:concavity} and \ref{hyp:localisation} in the following
averaged form.  For each $\sigma_0>1/2$ there are
$\delta_0,\lambda,\ell,\kappa>0$ with $1/2+\delta_0<\sigma_0$ such that every zero
$\rho=\beta+i\gamma$, $\beta\ge\sigma_0$, $0\le\gamma\le T$, gives total paired signal at
least $\kappa$ on a $t_0$-interval of length $\ell$ centred at $\gamma$, while the
integrated concavity deficit, background, and tail errors over $t_0\in[0,T]$ are
$o(T)$.  Then
\[
  N(\sigma_0,T)=o(T),
\]
where $N(\sigma_0,T)$ denotes the number of zeros $\rho=\beta+i\gamma$ with
$\beta\ge\sigma_0$ and $0\le\gamma\le T$, counted with multiplicity.
\end{theorem}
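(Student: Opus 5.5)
The argument will be a double-counting over the ordinate parameter: integrate the spectral average over $t_0\in[0,T]$ and compare a lower bound from the zeros with the upper bound from the hypotheses.

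Fix $\sigma_0>1/2$ and take $\delta_0,\lambda,\ell,\kappa$ from the averaged hypothesis. Put $\sigma=1/2-\delta_0$. For each large $T$ choose a symmetric height $Y=Y(T)$ with $Y\ge T+\ell$, large enough that Hypothesis~\ref{hyp:concavity} applies, and consider
\[
  I(T)=\int_0^T M_Y''(1/2-\delta_0;t_0,\phi_\lambda)\dd t_0 .
\]

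The proof then has four steps.

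\textbf{Step 1: decomposition.} Using Lemma~\ref{lem:finite-diff}, pair the zeros as in Propositions~\ref{prop:cancellation-12} and~\ref{prop:left-positive}. This splits $M_Y''$ into the contributions of the reflected pairs $\{\rho,1-\overline\rho\}$ with $\beta\ge\sigma_0$, plus everything else: critical and near-critical pairs, truncation, and background. By Proposition~\ref{prop:left-positive} together with Hypothesis~\ref{hyp:localisation}, each paired contribution is nonnegative up to a tail.

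\textbf{Step 2: lower bound.} By assumption, each zero with $\beta\ge\sigma_0$ and $0\le\gamma\le T$ gives paired signal at least $\kappa$ for every $t_0$ in an interval of length $\ell$ centred at $\gamma$. Integrating over $t_0$ and discarding the nonnegative rest, each such zero contributes at least $\kappa\ell$ to $I(T)$, up to boundary losses. Intervals that stick out of $[0,T]$ come only from zeros with $\gamma$ within $\ell/2$ of $0$ or $T$. I will handle these either by integrating over $[-\ell,T+\ell]$ or by noting that there are $O(\log T)=o(T)$ of them by the Riemann--von Mangoldt formula. Overlapping intervals are harmless, since the signals add.

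This gives
\[
  I(T)\ \ge\ \kappa\ell\,N(\sigma_0,T)-o(T)-(\text{integrated errors}).
\]

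\textbf{Step 3: upper bound.} Hypothesis~\ref{hyp:concavity}, in its averaged form, makes the integral of the concavity deficit over $t_0\in[0,T]$ equal to $o(T)$. Hence $I(T)\le o(T)$. The hypothesis also makes the integrated background and tail errors $o(T)$.

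\textbf{Step 4: conclusion.} Combining the two bounds gives $\kappa\ell\,N(\sigma_0,T)\le o(T)$. Since $\kappa\ell>0$ is fixed, $N(\sigma_0,T)=o(T)$.

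The main obstacle is Step 2's claim that the remaining pairs do not subtract from the isolated signals. Proposition~\ref{prop:left-positive} gives positivity only under the cosine condition. For other pairs $\cos((\gamma'-t_0)u)$ can be negative, and critical-line zeros contribute signed terms via Lemma~\ref{lem:poisson-identity} with $x=-\delta_0$.

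My first attempt will be to absorb all such signed contributions into the "integrated background and tail errors", which the hypothesis declares to be $o(T)$. This amounts to reading the averaged hypothesis as asserting that the total $M_Y''$ integral is at least $\kappa\ell N(\sigma_0,T)$ minus $o(T)$.

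If that reading is too generous, the fallback is to choose $\lambda$ with $\lambda\ell$ small. Then the cosine stays near $1$ across the whole $\ell$-window, positivity holds for all off-critical pairs, and only the critical-line terms need to be treated as error.
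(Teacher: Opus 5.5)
Your main line is the paper's proof. You integrate $M_Y''(1/2-\delta_0;t_0,\phi_\lambda)$ over $t_0\in[0,T]$, credit each zero with $\beta\ge\sigma_0$ with $\kappa\ell$, let overlapping windows add, bound the same integral by $o(T)$, and divide by $\kappa\ell$. Your $O(\log T)$ count of boundary windows is, if anything, more careful than the paper's $O(1)$. The paper justifies ``overlaps only add'' by saying Proposition~\ref{prop:left-positive} makes every paired signal nonnegative. That proposition gives nonnegativity only under the cosine condition, so the paper is in effect using your ``first attempt'' reading: all signed contributions are part of the hypothesised $o(T)$ errors.

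Your fallback, however, would fail, and seeing why shows that this reading is forced rather than merely convenient. Making $\lambda\ell$ small guarantees $\cos((\gamma'-t_0)u)>0$ on $[0,\lambda]$ only for $|\gamma'-t_0|<\pi/(2\lambda)$, which covers the pair's own window. But each pair must also be evaluated at $t_0$ in other zeros' windows, where $|\gamma'-t_0|$ is unbounded and the cosine changes sign on $[0,\lambda]$.

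No kernel can repair this. The exact pair contribution, from the proof of Proposition~\ref{prop:left-positive}, is
\[
  P_{\rho'}(t_0)=2\int_0^\infty h(u)\cos((\gamma'-t_0)u)\dd u,\qquad
  h(u)=u^2e^{-(\beta'-1/2)u}\sinh(\delta_0u)\widehat\phi_\lambda(u).
\]
Since $h(0)=0$ and $P_{\rho'}$ is integrable, Fourier inversion gives $\int_\R P_{\rho'}(t_0)\dd t_0=2\pi h(0)=0$. So the positive mass of $P_{\rho'}$, which includes the bump of height $\ge\kappa$ near $\gamma'$, is exactly balanced by negative mass located where $|t_0-\gamma'|>\pi/(2\lambda)$. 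Consequently, for a zero whose ordinate lies far from both $0$ and $T$, its own contribution to $I(T)$ equals $-\int_{\R\setminus[0,T]}P_{\rho'}$. This is close to $0$, not $\ge\kappa\ell$.

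Hence your phrases ``discarding the nonnegative rest'' and ``the signals add'' cannot be derived from Proposition~\ref{prop:left-positive}, nor from Hypothesis~\ref{hyp:localisation}, which only controls $|\gamma-t_0|\le L$. The inequality $I(T)\ge\kappa\ell N(\sigma_0,T)-o(T)$ is available only if the averaged hypothesis is read as declaring these negative tails part of the $o(T)$ error. Under that reading the theorem is essentially a restatement of its hypothesis. Drop the fallback and state that reading explicitly; with it, your argument is complete and coincides with the paper's.
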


\begin{proof}
For the fixed parameters supplied by the hypothesis, Proposition~\ref{prop:left-positive}
gives a nonnegative contribution from each reflected off-critical pair, and the averaged
assumption gives integrated contribution at least $\kappa\ell$ from each zero with
$\beta\ge\sigma_0$, apart from endpoints contributing $O(1)$.  Since the paired signals
are nonnegative, overlaps of the $t_0$-windows only add contributions.  The total positive
contribution is therefore at least
$\kappa\ell N(\sigma_0,T)+O(1)$.  The averaged spectral concavity and the assumed $o(T)$
control of the residual terms bound the same total positive contribution by $o(T)$.
Hence $\kappa\ell N(\sigma_0,T)\le o(T)$, which proves the claim.
\end{proof}

\begin{remark}
Theorems~\ref{thm:conditional-rh} and \ref{thm:zero-density} identify exactly where the
mathematical difficulty remains.  The positive paired signal is elementary once the
kernel assumptions are imposed.  The deep open part is proving the uniform spectral
concavity and background-control hypotheses for the actual completed zeta function.
Establishing them would be a result of very high importance, because it would transform
Riccati--Gamma concavity from a structural analogy into a genuine zero-location
principle for the central open problem of the field.
\end{remark}

\section{Numerical analysis}

The numerical experiments are designed to illustrate, not prove, the analytic results.
They compare the full logarithmic derivative, its universal pole model, symmetric
cancellation at the critical line, and the positive off-critical signal predicted by the
conditional spectral framework.

\begin{figure}[htbp]
  \centering
  \includegraphics[width=0.85\textwidth]{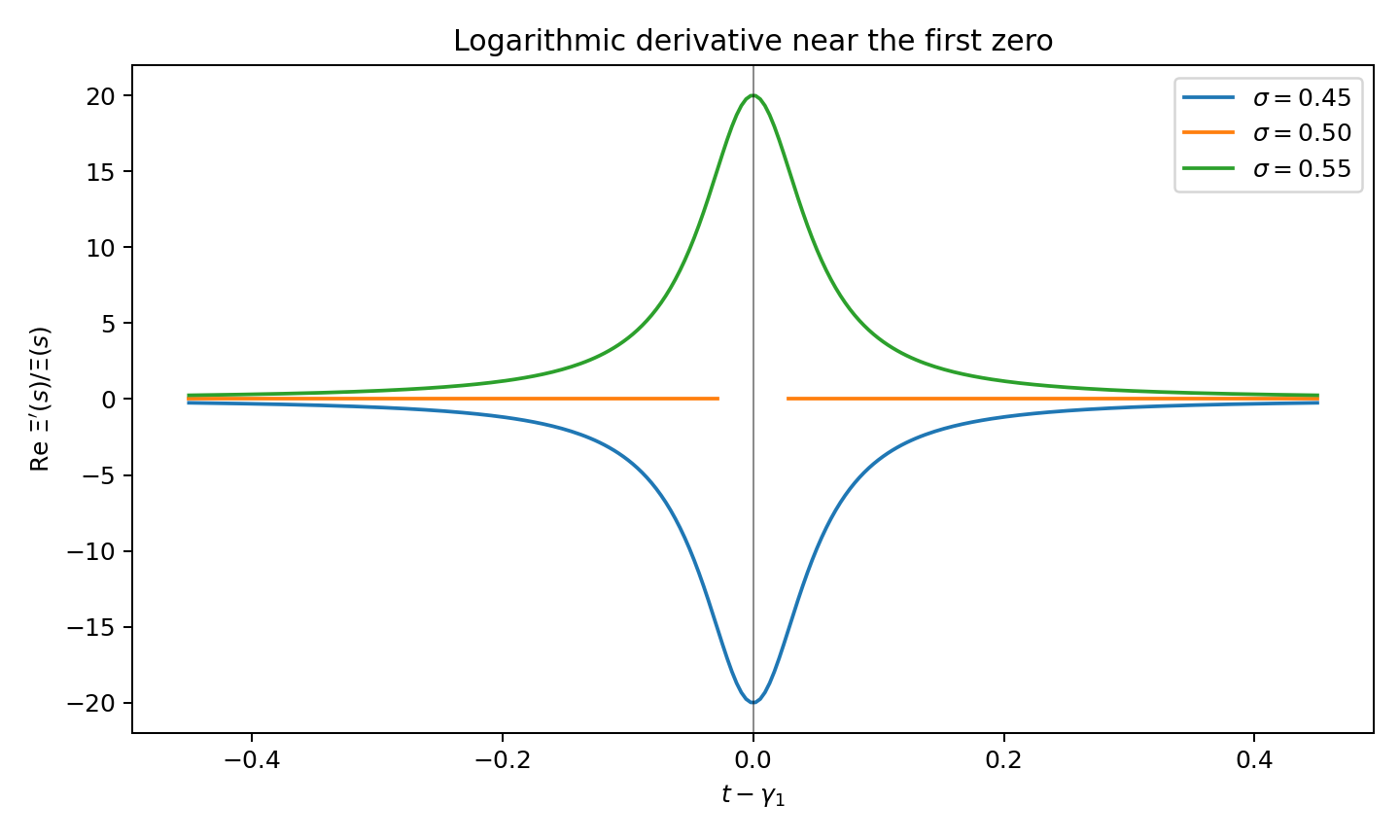}
  \caption{Real part of $\XiR'/\XiR$ near the first zero on three vertical lines.  The
  side lines $\sigma=0.45$ and $\sigma=0.55$ show the pole-like attraction and repulsion
  predicted by the local term $(s-\rho_1)^{-1}$.}
  \label{fig:realR}
\end{figure}

\begin{figure}[htbp]
  \centering
  \includegraphics[width=0.85\textwidth]{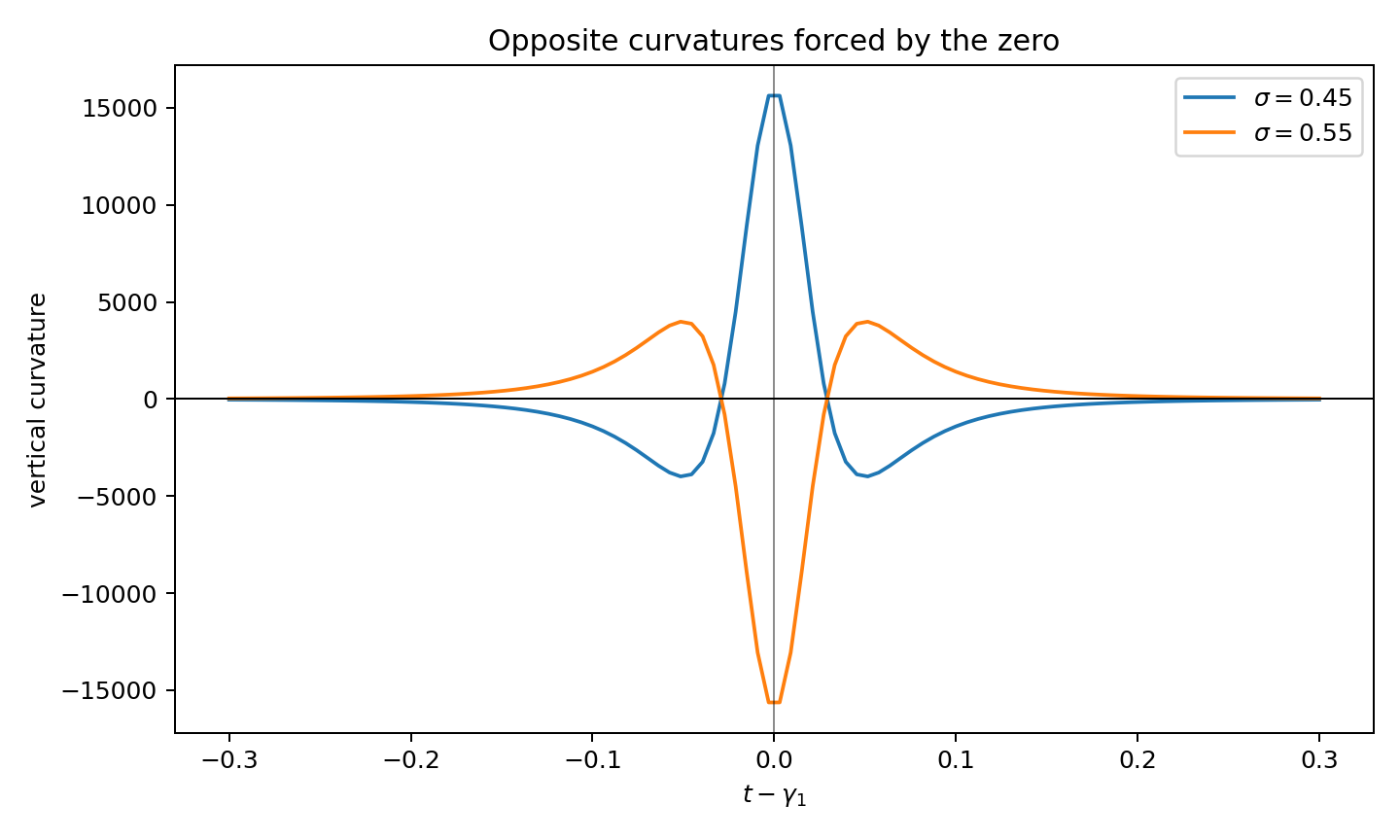}
  \caption{Finite-difference approximation of the vertical curvature
  $\dd^2/\dd t^2\,\RePart\XiR'/\XiR$.  The curvature is positive on the left side of
  the zero and negative on the right side at $t=\gamma_1$, matching
  Theorem~\ref{thm:local-obstruction}.}
  \label{fig:curvature}
\end{figure}

\begin{figure}[htbp]
  \centering
  \includegraphics[width=0.85\textwidth]{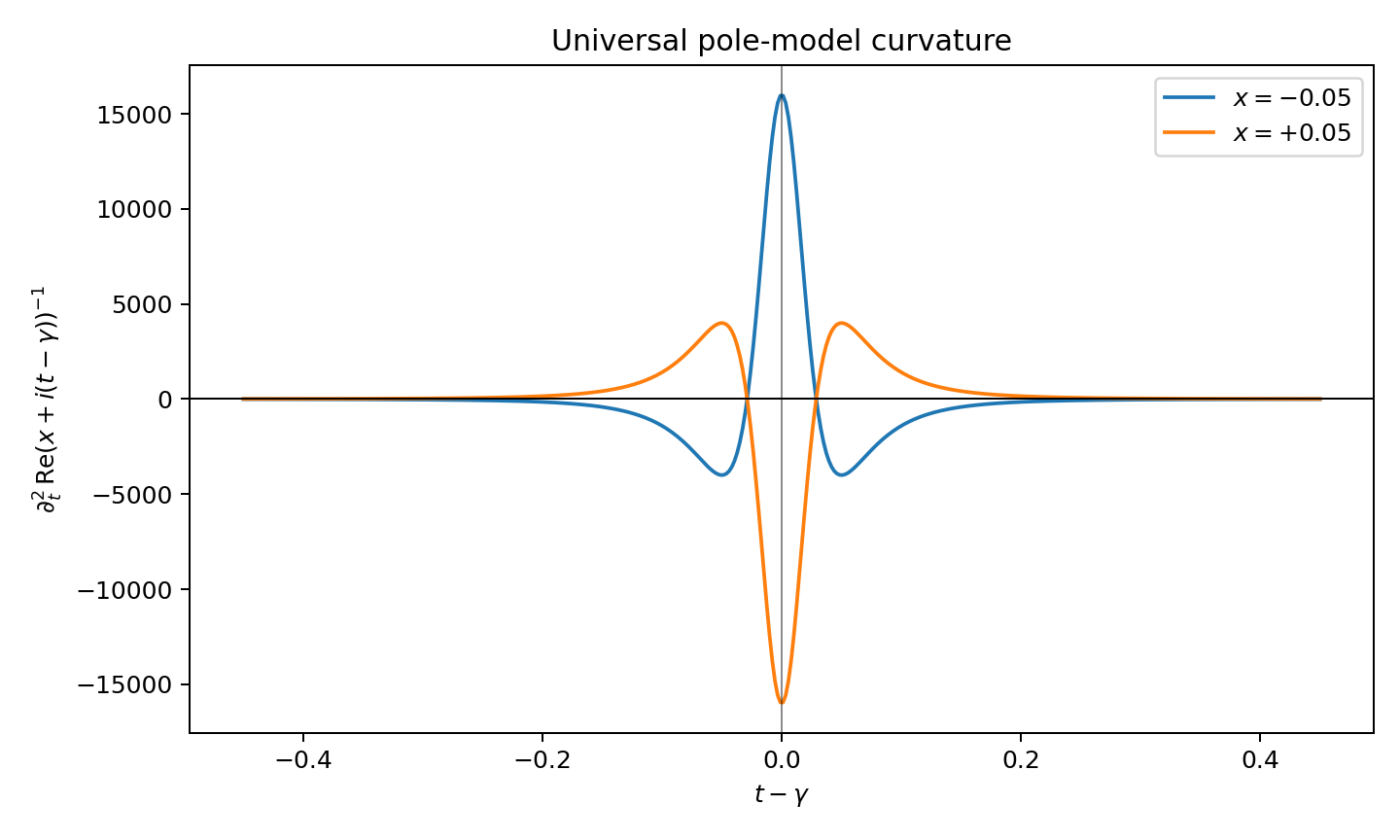}
  \caption{Curvature of the model pole
  $\RePart(a+i(t-\gamma))^{-1}=a/(a^2+(t-\gamma)^2)$.  This isolates the universal local
  mechanism from the arithmetic content of $\zetaR$.}
  \label{fig:model}
\end{figure}

\begin{figure}[htbp]
  \centering
  \includegraphics[width=0.85\textwidth]{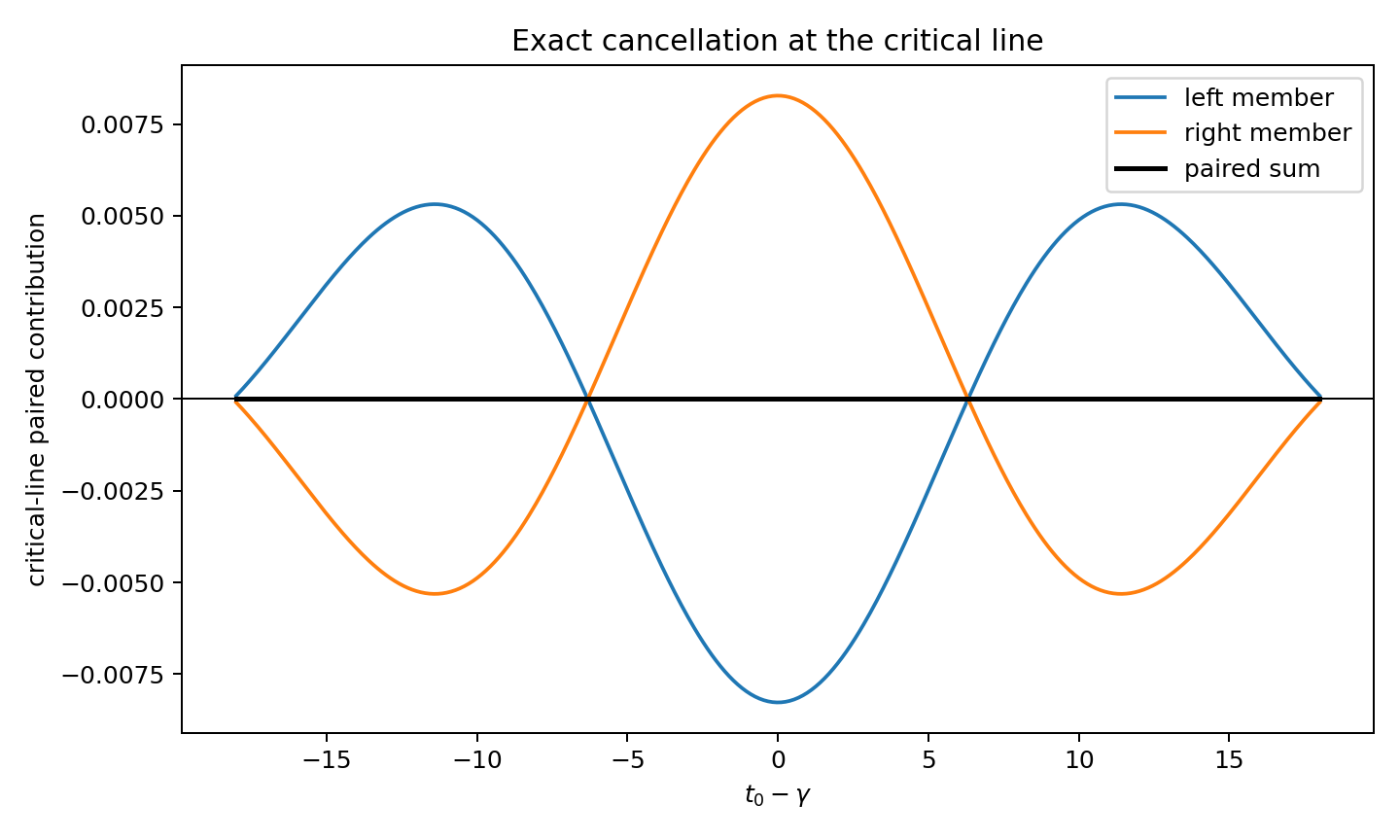}
  \caption{Model contribution of a symmetric off-critical pair at
  $\sigma=1/2$.  The left and right zero contributions cancel, illustrating
  Proposition~\ref{prop:cancellation-12}.}
  \label{fig:cancellation}
\end{figure}

\begin{figure}[htbp]
  \centering
  \includegraphics[width=0.85\textwidth]{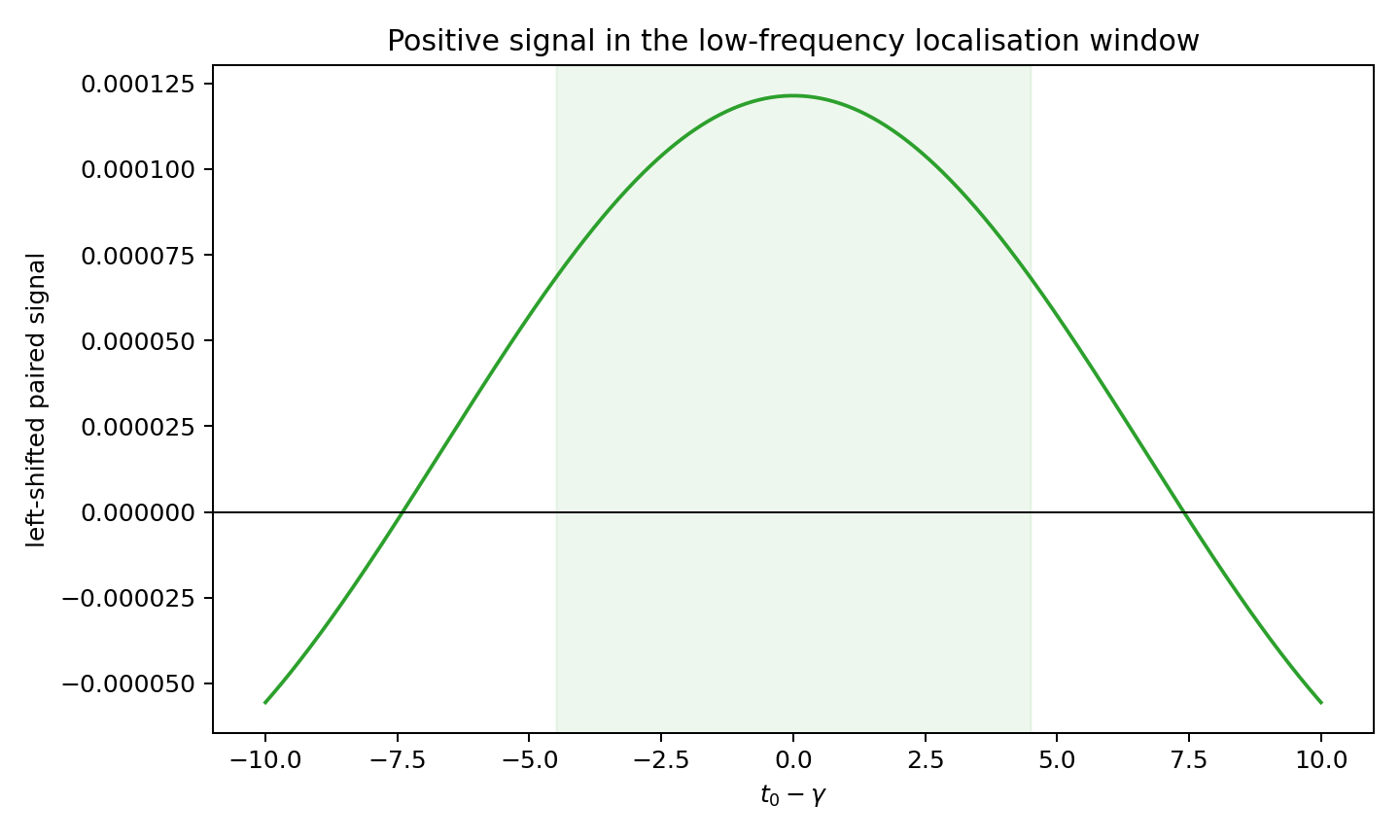}
  \caption{The same symmetric pair evaluated at $\sigma=1/2-\delta_0$.  The cancellation
  is broken and the paired signal is positive under a low-frequency nonnegative kernel,
  as in Proposition~\ref{prop:left-positive}.}
  \label{fig:leftsignal}
\end{figure}

\begin{figure}[htbp]
  \centering
  \includegraphics[width=0.85\textwidth]{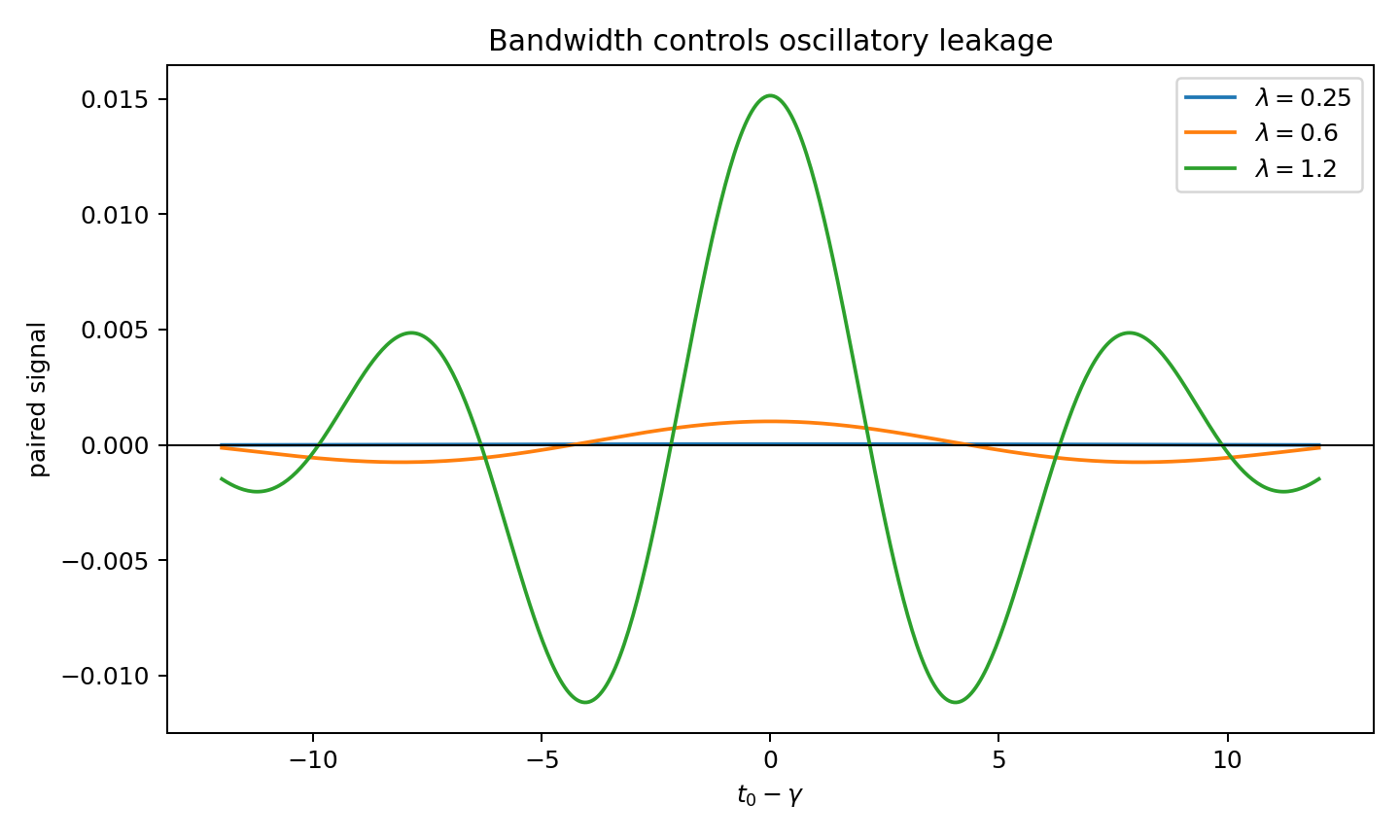}
  \caption{Dependence of the paired signal on spectral bandwidth.  Narrower
  low-frequency kernels suppress cosine oscillation and better preserve positivity;
  broader kernels show oscillatory leakage.}
  \label{fig:bandwidth}
\end{figure}

\begin{figure}[htbp]
  \centering
  \includegraphics[width=0.85\textwidth]{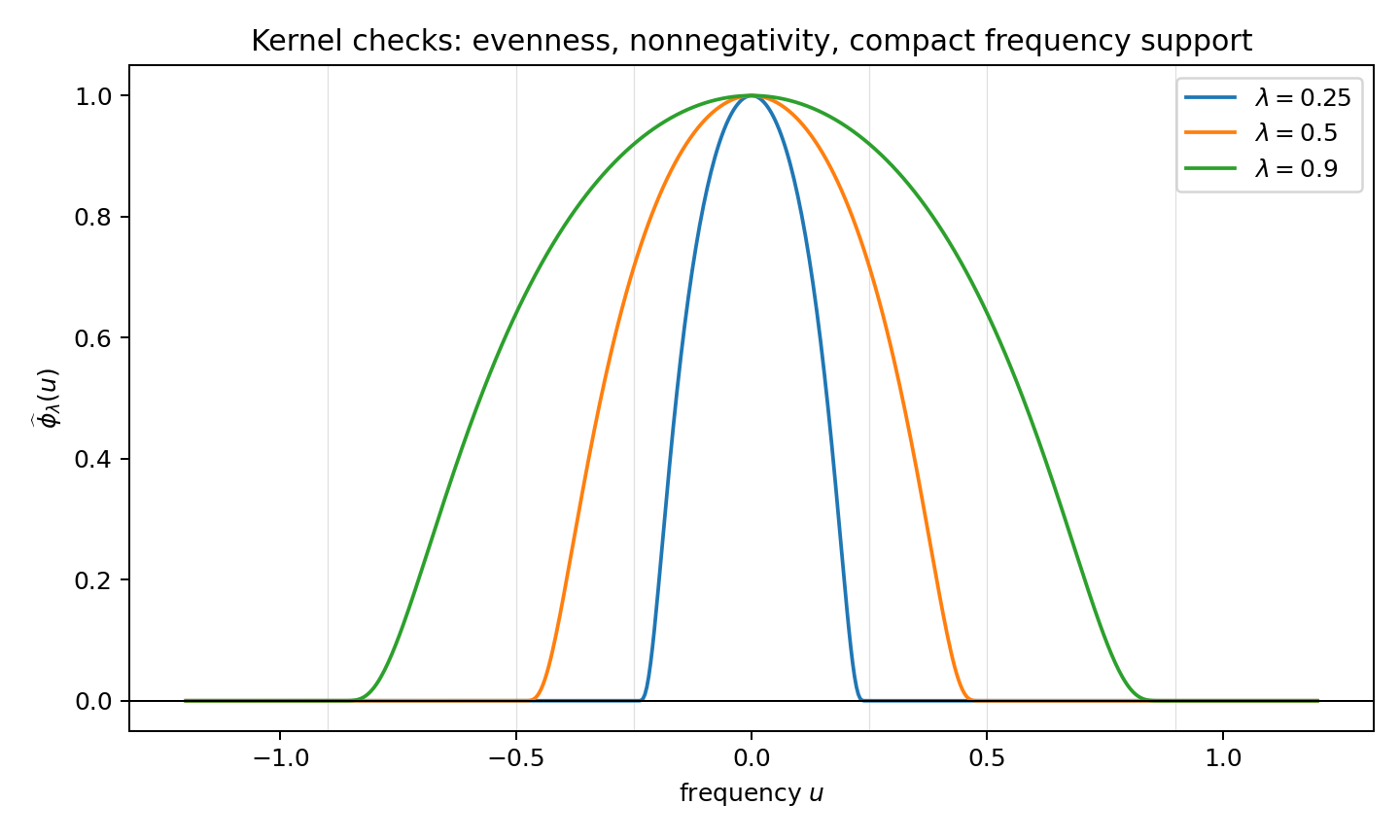}
  \caption{Frequency profiles of the smooth kernels used in the numerical model.  The
  curves are even, nonnegative, normalised by $\widehat\phi_\lambda(0)=1$, and supported
  in $[-\lambda,\lambda]$, matching the structural assumptions in
  Lemma~\ref{lem:smooth-kernels}.}
  \label{fig:kernelcheck}
\end{figure}

\begin{figure}[htbp]
  \centering
  \includegraphics[width=0.85\textwidth]{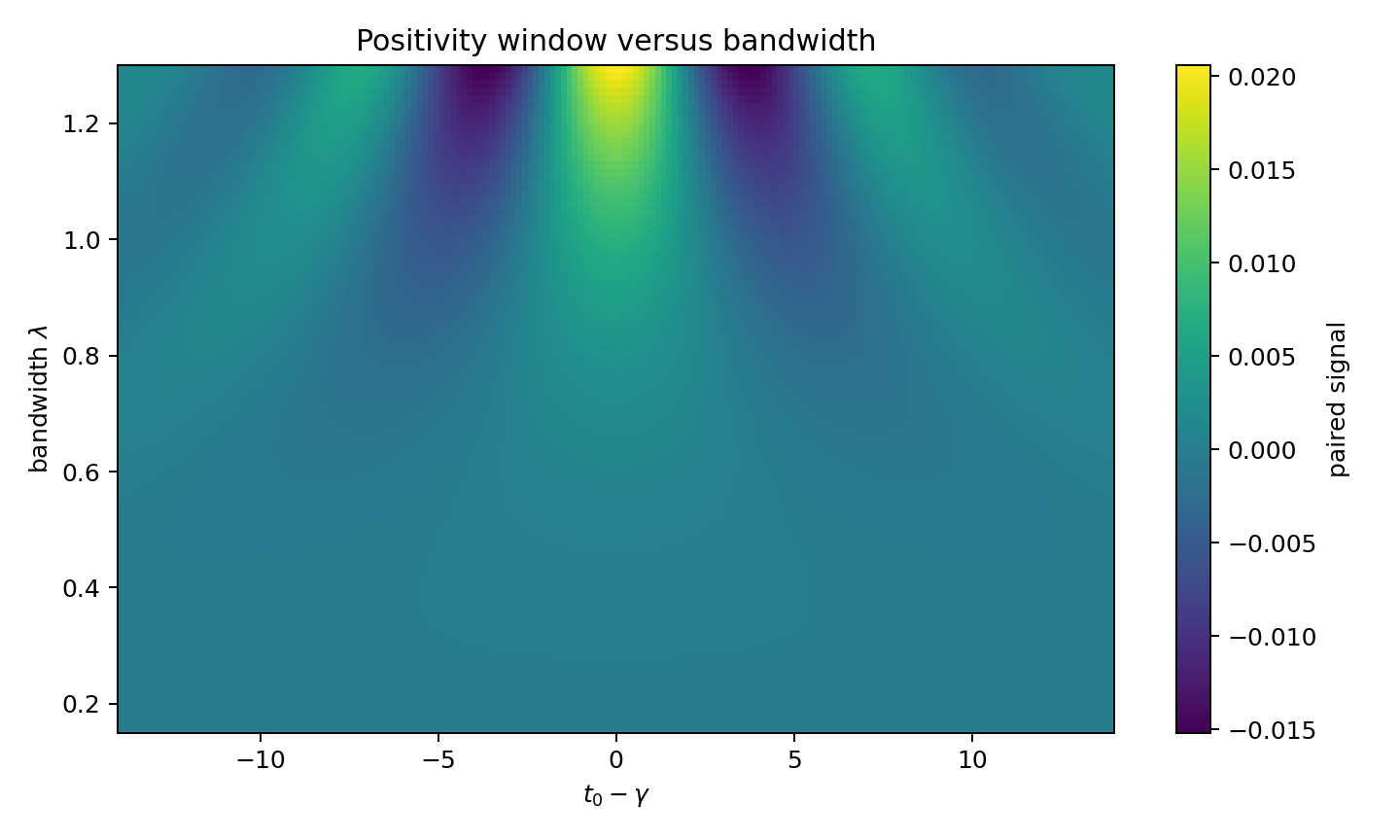}
  \caption{Heat map of the left-shifted paired signal as a function of the ordinate
  mismatch $t_0-\gamma$ and the bandwidth $\lambda$.  The positive region contracts as
  $\lambda$ increases, reflecting the cosine condition in
  Proposition~\ref{prop:left-positive}.}
  \label{fig:heatmap}
\end{figure}

\begin{figure}[htbp]
  \centering
  \includegraphics[width=0.85\textwidth]{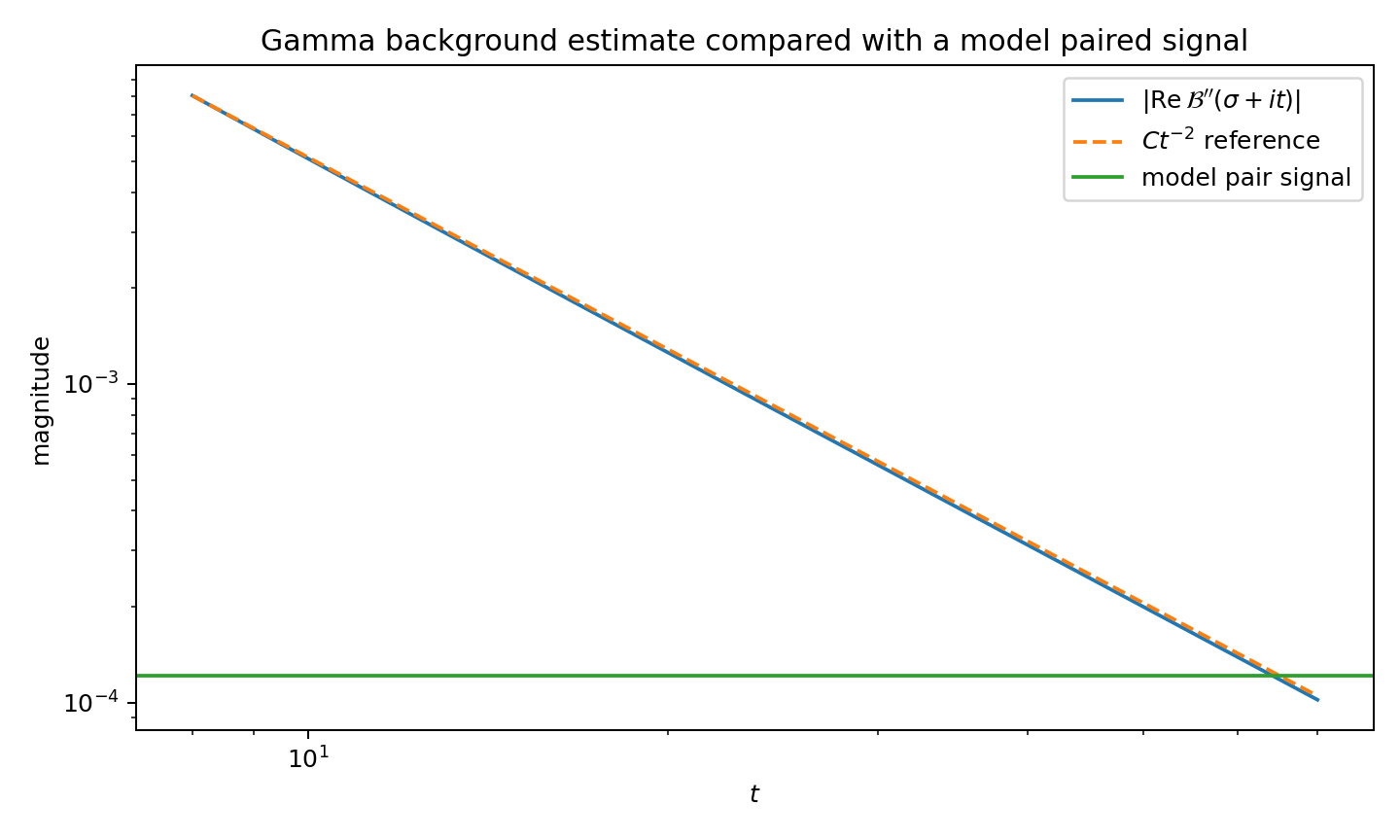}
  \caption{Magnitude of the gamma-background curvature
  $\operatorname{Re}\mathcal B''(\sigma+it)$ compared with a fixed model paired signal.
  The observed $t^{-2}$ decay agrees with Proposition~\ref{prop:background-size}; the
  figure does not test the unresolved arithmetic part of
  Hypothesis~\ref{hyp:background}.}
  \label{fig:background}
\end{figure}

Figures~\ref{fig:realR}--\ref{fig:model} illustrate the local part of the theory.  The
agreement, in the displayed neighbourhood, between the computed curvature of
$\XiR'/\XiR$ and the elementary pole model
shows that the obstruction to naive concavity is not a numerical artefact.  It is forced
by the Laurent expansion at a zero.

Figures~\ref{fig:cancellation}--\ref{fig:heatmap} address the spectral part.  The
critical-line evaluation hides off-critical information through exact symmetric
cancellation.  A left shift breaks this cancellation and creates a positive signal when
the cosine factor remains positive on the support of $\widehat\phi_\lambda$.  The kernel
profiles in Figure~\ref{fig:kernelcheck} verify, for the displayed examples, the
structural assumptions actually used in the proofs: evenness, nonnegativity, compact
frequency support, and normalisation.  The heat map shows the price of increasing
bandwidth: localisation in physical ordinate improves, but oscillation in
$\cos((\gamma-t_0)u)$ eventually creates sign leakage.

Figure~\ref{fig:background} separates a proved background estimate from the unresolved
part of the method.  The gamma-factor component is small at large height and follows the
quadratic decay predicted by Proposition~\ref{prop:background-size}.  This is favourable
for the conditional programme, but it is not a substitute for controlling the arithmetic
part of $\zetaR'/\zetaR$ or the limiting error in replacing the finite zero field by the
full logarithmic derivative.  The numerical evidence therefore supports the internal
consistency of the finite spectral mechanism, while confirming that the decisive
background-control hypothesis remains the main open obstacle.

\section{Conclusion}

This article gives a rigorous partial solution of the Riccati--Gamma concavity problem
posed in Problem~\ref{prob:RG}.  The naive pointwise route is ruled out by a complete
local proof.  The finite spectral framework then identifies a more promising mechanism:
symmetric off-critical pairs cancel at the critical line but produce a positive
low-frequency signal when viewed from the left.

The Riemann Hypothesis is not proved here unconditionally.  What is demonstrated is
partial: the obstruction, cancellation, and positive-pair mechanisms are proved, while
the global concavity and background-control assertions remain explicit hypotheses.
Proving those hypotheses for $\XiR$ would have major significance for analytic number
theory, because it would convert Riccati--Gamma concavity into a direct zero-location
principle for the completed zeta function.
\section{Reproducible Python script}

The local reproducibility material is the self-contained script
\url{https://github.com/coveidragos/Code_Python_Riemann}.  It generates all nine figures used in the
numerical section.  The computations use NumPy, Matplotlib, and mpmath.  The script is
illustrative: it verifies the hypotheses for the displayed finite model examples and
does not constitute numerical evidence for an unconditional proof of RH.

\section*{Disclosure statement}

The author declares that he has no conflict of interest.

\section*{Data availability statement}

No external datasets were used.

\section*{Notes on contributor(s)}

The author is solely responsible for the conception, analysis, numerical
implementation, and writing of this manuscript.

\section*{Acknowledgements}

The author thanks the developers of open-source mathematical-software
ecosystems whose libraries (NumPy, SciPy, and Matplotlib) were used to
produce the numerical validations. The core ideas, structural formulations, and numerical simulations presented in this article were developed with the invaluable assistance of free AI models.
\appendix

\end{document}